\DeclareMathAlphabet{\mathfr}{U}{euf}{m}{n}
\newtheorem{theorem}{Theorem}[section]
\newtheorem{proposition}[theorem]{Proposition}
\newtheorem{lemma}[theorem]{Lemma}
\theoremstyle{definition}
\newtheorem{remark}[theorem]{Remark}
\newtheorem{example}[theorem]{Example}
\numberwithin{equation}{section}
\newcommand{\Aut}{\operatorname{Aut}}
\newcommand{\Conj}{\operatorname{Conj}}
\newcommand{\End}{\operatorname{End}}
\newcommand{\Exp}{\mathrm{E}}
\newcommand{\Frob}{\operatorname{Frob}}
\newcommand{\Gal}{\mathrm{Gal}}
\newcommand{\Id}{\mathrm{Id}}
\newcommand{\ord}{\operatorname{ord}}
\newcommand{\Out}{\operatorname{Out}}
\newcommand{\Trace}{\operatorname{Trace}}
\newcommand{\Teich}{\operatorname{Teich}}
\newcommand{\lcm}{\operatorname{lcm}}
\newcommand{\GL}{\mathrm{GL}}
\newcommand{\SU}{\mathrm{SU}}
\newcommand{\Sym}{\mathrm{Sym}}
\newcommand{\Unitary}{\mathrm{U}}
\newcommand{\USp}{\mathrm{USp}}
\newcommand{\cyc}[1]{{\mathrm{C}_#1}}
\newcommand{\dih}[1]{{\mathrm{D}_#1}}
\newcommand{\rs}[2]{\left(\frac{#1}{#2}\right)}
\newcommand{\C}{\mathbb C}
\newcommand{\gM}{\mathfrak M}
\newcommand{\N}{\mathbb N}
\newcommand{\Q}{\mathbb Q}
\newcommand{\Qbar}{{\overline{\mathbb Q}}}
\newcommand{\Z}{\mathbb Z}
\newcommand{\nothing}{{}}
\newcommand{\Mtime}{\textsf{M}}
\newcommand{\fp}{\mathfrak{p}}
\begin{document}
\title{Sato-Tate groups of some weight 3 motives}
\date{\today}

\author{Francesc Fit\'e}
\address{Universit\"at Duisburg-Essen/Institut f\"ur Experimentelle Mathematik, Fakult\"at f\"ur Mathematik,
D-45127, Essen, Germany}
\curraddr{}
\email{francesc.fite@gmail.com}
\thanks{Fit\'e received financial support from the German Research council, via CRC 701.}

\author{Kiran S. Kedlaya}
\address{Department of Mathematics, University of California, San Diego, 9500 Gilman Drive \#0112, La Jolla, CA 92093, USA}
\curraddr{}
\email{kedlaya@ucsd.edu}
\urladdr{http://kskedlaya.org}
\thanks{Kedlaya was supported by NSF (grant DMS-1101343) and
UCSD (S.E. Warschawski professorship).}

\author{Andrew V. Sutherland}
\address{Department of Mathematics, Massachusetts Institute of Technology, 77 Mass. Ave., Cambridge, MA  02139, USA}
\curraddr{}
\email{drew@math.mit.edu}
\urladdr{http://math.mit.edu/~drew} 
\thanks{Sutherland was supported by NSF (grant DMS-1115455).}

\begin{abstract}
We establish the group-theoretic classification of Sato-Tate groups of self-dual motives of weight $3$ with rational coefficients and Hodge numbers
$h^{3,0} = h^{2,1} = h^{1,2} = h^{0,3} = 1$. We then describe families of motives that realize some of these
Sato-Tate groups, and provide numerical evidence supporting equidistribution. One of these families
arises in the middle cohomology of certain Calabi-Yau threefolds appearing in the Dwork quintic pencil; for motives
in this family, our evidence suggests that the Sato-Tate group is always equal to the full unitary symplectic group
$\USp(4)$.
\end{abstract}
\subjclass[2010]{Primary 11M50; Secondary 11G09, 14K15, 14J32}
\maketitle
\tableofcontents

\section{Introduction}\label{section: introduction}

For a fixed elliptic curve without complex multiplication defined over a number field, the \emph{Sato-Tate conjecture} predicts the average distribution of the Frobenius trace at a variable prime.
This conjecture 
may be naturally generalized to an arbitrary motive over a number field in terms of equidistribution
of classes within a certain compact Lie group, the \emph{Sato-Tate group},
as described in \cite[\S 13]{Ser95}, \cite[Ch.\ 8]{Ser12}, and \cite[\S 2]{FKRS12}. This equidistribution problem reduces naturally
(as described in \cite[Appendix to Chapter~1]{Ser68}) to establishing analytic properties of certain motivic
$L$-functions, but unfortunately this latter problem is generally quite difficult.
Besides cases of complex multiplication, one of the few cases where equidistribution is known is
elliptic curves over totally
real number fields \cite{BLGG11}.

However, the problem of classifying the Sato-Tate groups  that can arise from a given class of motives is more tractable. This problem splits naturally into two subproblems: the \emph{group-theoretic classification} problem
of identifying those groups consistent with certain group-theoretic restrictions known to apply to Sato-Tate groups
in general, and the \emph{arithmetic matching} problem of correlating the resulting groups with 
the arithmetic of motives in the family. In the case of 1-motives 
of abelian surfaces, both subproblems have been solved in \cite{FKRS12}: there turn out to be exactly
52 groups that arise, up to conjugation within the unitary symplectic group $\USp(4)$.

In this paper, we consider a different family of motives for which we solve the group-theoretic classification
problem, give some partial results towards the arithmetic matching problem, and present numerical evidence supporting the equidistribution conjecture. Before describing the family of motives in question, let us 
recall the general formulation of the group-theoretic classification problem for
self-dual motives with rational coefficients of fixed weight $w$, dimension $d$, and Hodge numbers $h^{p,q}$. The problem is to identify groups
obeying the \emph{Sato-Tate axioms}, as formulated in \cite{FKRS12} (modulo one missing condition; see
Remark~\ref{R:why not unitary2}).
\begin{enumerate}
\setlength\itemindent{10pt}
\item[(ST1)]
The group $G$ is a closed subgroup of $\USp(d)$ or $\operatorname{O}(d)$, depending on whether $w$ is odd or even (respectively).
\item[(ST2)] (Hodge condition)
There exists a subgroup $H$ of $G$, called a \emph{Hodge circle}, which is the image of 
a homomorphism $\theta\colon \Unitary(1) \to G^0$
such that $\theta(u)$ has eigenvalues $u^{p-q}$ with multiplicity $h^{p,q}$.
Moreover, the Hodge circles generate a dense subgroup of the identity component~$G^0$.
\item[(ST3)] (Rationality condition)
For each component $C$ of $G$ and each irreducible character $\chi$ of $\GL_{d}(\C)$, the expected value (under the Haar measure) of $\chi(\gamma)$ over $\gamma\in C$ is an integer.
\end{enumerate}
For fixed $w,d,h^{p,q}$, there are only finitely many groups $G$ satisfying (ST1), (ST2), and (ST3),
up to conjugation within $\USp(d)$ or $\operatorname{O}(d)$; see Remark 3.3 in \cite{FKRS12}.

Since the group-theoretic classification is known for 1-motives of abelian varieties of dimensions $1$ and $2$, it is natural
to next try the case of abelian threefolds. We are currently working on this classification, but it is likely
to be rather complicated, involving many hundreds of groups. In this paper, we instead consider the case where
$w=3$, $d=4$, and $h^{3,0} = h^{2,1} = h^{1,2} = h^{0,3} = 1$. We have chosen this case because, on the one hand,
it is similar enough to the case of abelian surfaces that much of the analysis of \cite{FKRS12} carries over,
and, on the other hand, it is of some arithmetic interest due to the multiple ways in which such motives arise.
One of these ways is by taking the symmetric cube of the 1-motive associated to an elliptic curve.
Another way is to consider a member of the \emph{Dwork pencil} of Calabi-Yau projective threefolds
defined by the equation
\begin{equation}\label{eq:quintic}
x_0^5 + x_1^5 + x_2^5 + x_3^5 + x_4^5 = t x_0 x_1 x_2 x_3 x_4,
\end{equation}
in which $t$ represents a nonzero parameter, and then extract the 3-motive invariant under the action of the automorphism group
$(\Z/5\Z)^4$. These two constructions are closely related: for instance, the coincidence between certain mod $\ell$ Galois representations arising from the two constructions is exploited in \cite{HSBT10} to yield one of the key ingredients in the proof of the Sato-Tate conjecture for elliptic curves. Additional constructions
can be achieved using direct sums and tensor products of motives associated to elliptic curves and modular forms (the latter case was suggested to us by Serre).

The primary result of this paper is the resolution of the group-theoretic classification problem for motives
of the shape we have just described. This turns out to be similar to the classification problem in \cite{FKRS12}
but substantially simpler due to the less symmetric shape of the Hodge circle: we end up with only 26 groups up to conjugation.
These groups are described in \S\ref{section: classification} and summarized in
Table~\ref{table:STgroups}. As in \cite{FKRS12},
we compute moment sequences associated to these groups in order to facilitate numerical experiments; these appear in \S\ref{section: moments}.

As a partial result towards the arithmetic matching problem, we describe several constructions yielding motives of the given form and then match examples of these constructions to our list of Sato-Tate groups based on numerical experiments.
For example, the symmetric cube construction gives rise to Sato-Tate groups with identity component $\Unitary(1)$
or $\SU(2)$, depending on whether or not the original elliptic curve has complex multiplication (CM), and we can provisionally
identify the exact Sato-Tate group (up to conjugation) by comparing experimentally derived moment statistics with the moment sequences computed in \S\ref{section: moments}.
In the CM case we are actually able to prove equidistribution using the techniques developed in \cite{FS12}; this follows from Lemma \ref{lemma:CMproddist}.
More generally, using the direct sum of a pair of motives arising from CM modular forms of weights 2 and 4, we obtain examples matching all 10 of the groups in our classification that have identity component $\Unitary(1)$, and we are able to prove equidistribution in each of these cases (see Lemma \ref{lemma:corrmf}).
Additional cases arise from considering Hilbert modular forms and Hecke characters over CM fields.
In total, we exhibit examples that appear to realize 25 of the 26 possible Sato-Tate groups obtained by our classification.

For the Dwork pencil construction, we are able to collect numerical evidence thanks to the work of 
Candelas, de la Ossa, and Rodriguez Villegas 
\cite{COR00, COR03}, who, motivated by the appearance of the Dwork pencil in the study of \emph{mirror symmetry}
in mathematical physics, described some $p$-adic analytic formulas for the $L$-function coefficients.
The resulting evidence may be a bit surprising on first glance:
one might expect (by analogy with abelian varieties) that
the group $\USp(4)$ arises for most members of the pencil with a sparse but infinite set of exceptions, but in fact we found no exceptions at all other than $t=0$ (the Fermat quintic). A Hodge-theoretic heuristic suggesting the existence of only finitely many exceptions in this family (and also applicable in many other cases) has been proposed by de Jong \cite{dJ02}.

For a gentle introduction to motives, we refer the reader to \cite{Milne}. 

\section{Group-theoretic classification}\label{section: classification}

In this section, we classify, up to conjugation, the groups $G \subseteq \GL_4(\C)$ that satisfy the Sato-Tate axioms (ST1), (ST2), and (ST3); the list of possible groups (in notation introduced
later in this section) can be found in Table~\ref{table:STgroups}.
As in \cite{FKRS12},
we exhibit explicit representatives of each conjugacy class for the purposes of computing moments, which are
needed for our numerical experiments (see \S\ref{section: moments}).
 This forces us to give an explicit description of the matrix groups we are using.

Let $M$ (resp. $S$) denote a matrix of $\GL_4(\C)$ corresponding to a Hermitian (resp. symplectic) form, that is, a matrix satisfying $M^t=\overline M$ (resp. $S^t=-S$). The unitary symplectic group of degree $4$ (relative to the forms $M$ and $S$) is defined as
\[
\USp(4):=\left\{A\in\GL_4(\C)\,|\,A^tSA=S\,,\,\overline A^tMA=M\,\right\}\,.
\]
For the purposes of the classification, it will be convenient to make different choices of $S$ and $M$ according to the different possibilities for the identity component $G^0$ of $G$. Unless otherwise specified, we will take~$M$ to be the identity matrix $\Id$.

As in \cite[Lemma~3.7]{FKRS12}, one shows that if $G$ satisfies the Sato-Tate axioms,
then $G^0$ is conjugate to one of
\[
\Unitary(1),\medspace \SU(2),\medspace \Unitary(2), \medspace \Unitary(1) \times \Unitary(1),\medspace \Unitary(1) \times \SU(2),\medspace \SU(2) \times \SU(2),\medspace \USp(4).
\] 
(The case $\Unitary(2)$ does not occur in \cite[Lemma~3.7]{FKRS12}; see Remark~\ref{R:why not unitary2} for the reason why.)
We now proceed by considering each of these options in turn. Throughout the discussion, let
$Z$ and $N$ denote the centralizer and normalizer, respectively, of $G^0$ in $\USp(4)$, so that
$N/(ZG^0)$ is finite and $G \subseteq N$. (Beware that this convention is followed in \cite[\S 3.4]{FKRS12} but not in \cite[\S 3.5]{FKRS12}.)

\subsection{The case $G^0=\Unitary(1)$}\label{section: unitary}

To treat the case $G^0 = \Unitary(1)$, 
we assume that the symplectic form preserved by $\USp(4)$ is given by the matrix
\[
S := \begin{pmatrix} 0 & \Id_2 \\
-\Id_2 & 0
\end{pmatrix}\,.
\]
In this case $G^0$ must be equal to a Hodge circle $H$, which we may take to be the image of the homomorphism
\begin{equation}\label{equation: HodgeU1}
\theta\colon \Unitary(1) \to \USp(4)\,,\qquad \theta(u) :=\begin{pmatrix} U & 0 \\ 0 & \overline U \end{pmatrix}\,,\qquad U:=\begin{pmatrix}u^3 & 0\\ 0 & u\end{pmatrix}\,.
\end{equation}
Note that the centralizer of $G^0$ within $\GL(4, \C)$ consists of diagonal matrices. For such a matrix to be symplectic and unitary it must be of the form
\begin{equation}\label{equation: centralizerU1}
\begin{pmatrix} V_2 & 0 \\ 0 & \overline V_2  \end{pmatrix}\,,\qquad V_2 :=\begin{pmatrix} v_1 & 0 \\ 0 & v_2
\end{pmatrix}\,,
\end{equation}
where $v_1$ and $v_2$ are in $\Unitary(1)$. We thus conclude that $Z \simeq \Unitary(1)\times\Unitary(1)$.
The quotient $N/Z$ injects into the continuous automorphisms $\Aut^{\text{cont}}(G^0)$ of $G^0$. Since $\Aut^{\text{cont}}(\Unitary(1))$ consists just of the identity and complex conjugation, $Z$ has index 2 in $N$. Thus $N$ has the form
\[
N = Z \cup JZ, \qquad J := \begin{pmatrix} 0 & J_2 \\ -J_2 & 0 \end{pmatrix}\,,\qquad J_2:=\begin{pmatrix}
1 & 0 \\ 0 & -1 
\end{pmatrix}\,.
\]
Conjugation on $Z$ by $J$ corresponds to complex conjugation, thus we have
\[
N/G^0\medspace \simeq \medspace \Unitary(1) \rtimes \Z/2\Z \,,
\]
where the nontrivial element of $\Z/2\Z$ acts on $\Unitary(1)$ by complex conjugation.

We first enumerate the options for $G$ assuming that $G\subseteq Z$.
Any finite subgroup of order $n$ of $Z/G^0\simeq \Unitary(1)$ is cyclic. It lifts to a subgroup $C_n$ of $Z$, for which we may choose the following presentation:
$$
C_n:=\langle G^0,\zeta_n\rangle\,,\qquad \zeta_n:=\begin{pmatrix}
\Theta_n & 0 \\ 0 & \overline \Theta_n
\end{pmatrix}\,,
\qquad
\Theta_n :=\begin{pmatrix} e^{2\pi i/n} & 0 \\ 0 & 1 \end{pmatrix}\,.
$$

\begin{lemma}
If the rationality condition (ST3) is satisfied for $C_n$, then $n$ lies in $\{1,2,3,4,6\}$.
\end{lemma}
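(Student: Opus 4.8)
The plan is to extract a numerical constraint on $n$ by testing the rationality condition against a single, well-chosen character of $\GL_4(\C)$. Write $\omega_j := e^{2\pi i j/n}$. Then $C_n$ has $n$ connected components $C^{(j)} = \zeta_n^j\,G^0$ for $j = 0,1,\dots,n-1$, and a typical element of $C^{(j)}$ is the diagonal matrix $\zeta_n^j\theta(u) = \operatorname{diag}(\omega_j u^3,\, u,\, \overline{\omega_j}\,u^{-3},\, u^{-1})$ as $u$ ranges over $\Unitary(1)$ with Haar measure. For any character (or virtual character) $\chi$ of $\GL_4(\C)$, the map $u \mapsto \chi(\zeta_n^j\theta(u))$ is a Laurent polynomial in $u$; since $\int_{\Unitary(1)} u^m\,du$ is $1$ when $m=0$ and $0$ otherwise, the expected value of $\chi$ over $C^{(j)}$ is exactly the constant term of this Laurent polynomial. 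So the entire problem reduces to computing one such constant term.

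First I would take $\chi$ to be the character of the $\GL_4(\C)$-representation $V \otimes V \otimes W$, where $V$ is the standard $4$-dimensional representation and $W$ is its second exterior power; on a diagonal matrix with eigenvalues $\lambda_1,\dots,\lambda_4$ this is $e_1^2 e_2$ in the elementary symmetric functions of the $\lambda_i$. A direct expansion of $e_1^2 e_2$ evaluated at $(\omega_j u^3,\, u,\, \overline{\omega_j}\,u^{-3},\, u^{-1})$ shows its constant term in $u$ to be $16 + \omega_j + \overline{\omega_j} = 16 + 2\cos(2\pi j/n)$. (If one prefers to avoid this bookkeeping, the virtual character $g \mapsto \Trace(g)\Trace(g^3)$ serves equally well, its average over $C^{(j)}$ being exactly $2\cos(2\pi j/n)$; by Newton's identities it is a $\Z$-linear combination of irreducible characters, so the rationality condition applies to it as well.) Hence, if the rationality condition holds for $C_n$, then $2\cos(2\pi j/n) \in \Z$ for all $j$; in particular $2\cos(2\pi/n)\in\Z$.

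To finish, I would observe that $2\cos(2\pi/n)$ lies in $[-2,2]$, so it must be one of $-2,-1,0,1,2$, and these values are attained precisely for $n = 2,3,4,6,1$ respectively and for no other $n$: for $n\ge 7$ we have $2\cos(2\pi/n)\in(1,2)$, while $2\cos(2\pi/5) = (\sqrt5-1)/2 \notin \Z$. Therefore $n\in\{1,2,3,4,6\}$, as claimed.

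I do not expect a serious obstacle: the argument is a one-character instance of (ST3) followed by an elementary trigonometric observation. The only points that merit a sentence of care are (i) carrying out the explicit constant-term computation for the chosen character, and (ii) noting that (ST3), although phrased for irreducible characters, automatically extends to arbitrary virtual characters since integrality is preserved under $\Z$-linear combinations (this is needed only for the power-sum variant).
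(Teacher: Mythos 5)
Your proof is correct and follows the same strategy as the paper's: apply (ST3) to a single explicit character evaluated on the connected component of $\zeta_n$, reduce its expected value to a constant-term computation over the Hodge circle, and deduce an integrality constraint on $\cos(2\pi/n)$. The only difference is the choice of test character: the paper averages $(\Trace g)^4$ over the component, obtaining $36+8\cos\left(\frac{2\pi}{n}\right)$, whereas your character $V^{\otimes 2}\otimes \wedge^2 V$ (or the virtual character $\Trace(g)\Trace(g^3)$) gives $16+2\cos\left(\frac{2\pi}{n}\right)$ (resp.\ $2\cos\left(\frac{2\pi}{n}\right)$). Your variant is marginally cleaner at the final step, since $2\cos\left(\frac{2\pi}{n}\right)\in\Z$ immediately forces $n\in\{1,2,3,4,6\}$, while passing from $8\cos\left(\frac{2\pi}{n}\right)\in\Z$ to $\cos\left(\frac{2\pi}{n}\right)\in\frac{1}{2}\Z$ implicitly uses that $2\cos\left(\frac{2\pi}{n}\right)$ is an algebraic integer (or Niven's theorem).
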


\begin{proof}
By the rationality condition, the average over $r\in[0,1]$ of the fourth power of the trace of the matrix
$$
\theta(e^ {2\pi i r})\zeta_n
$$
is an integer. It is an elementary but tedious computation to check that this average is equal to
\[
36+8\cos\left(\frac{2\pi}{n}\right)\,.
\]
This implies $\cos\left(\frac{2\pi}{n}\right)=\frac{i}{2}$, for $i\in\{-2,-1,0,1,2\}$, hence $n\in\{1,2,3,4,6\}$.
\end{proof}

We now consider the case $G\not\subseteq Z$. For $n\in\{1,2,3,4,6\}$, define 
\[
J(C_n):=\langle G^0,\zeta_n, J\rangle\,.
\]


\begin{lemma} Let $G$ be a subgroup of $N$ satisfying the rationality condition (ST3), and for which $\theta(\Unitary(1))\subseteq G\not\subseteq Z$.  Then $G$ is conjugate 
to $J(C_n)$ for some $n\in\{1,2,3,4,6\}$.
\end{lemma}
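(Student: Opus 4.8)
The plan is to leverage the structure $N = Z \cup JZ$ already worked out, together with the preceding lemma. Since $G \not\subseteq Z$ but $G$ does contain $G^0 = \theta(\Unitary(1))$, the intersection $G \cap Z$ is a subgroup of $Z$ strictly containing $G^0$, and $G = (G\cap Z) \cup J'(G\cap Z)$ for some element $J' \in G \setminus Z$. First I would show that $G \cap Z$ must be finite over $G^0$: the image of $G\cap Z$ in $Z/G^0 \simeq \Unitary(1)$ is a closed subgroup, and if it were all of $\Unitary(1)$ then $G^0$ would be strictly larger than $\Unitary(1)$ (since $G^0$ is the identity component and $Z$ is connected), contradicting our case assumption. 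Hence the image is finite cyclic of some order $n$, so $G \cap Z$ is conjugate to $C_n$; and applying the previous lemma to $G\cap Z$ (which satisfies (ST3) because $G$ does and (ST3) is a condition on each component separately) forces $n \in \{1,2,3,4,6\}$.

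Next I would normalize the coset representative. Any $J' \in G\setminus Z$ has the form $J' = \smallmat{0}{B}{-\overline{B}}{0}$ up to the centralizer action, because conjugation by $J'$ must induce complex conjugation on $G^0$ (the nontrivial element of $\Aut^{\mathrm{cont}}(\Unitary(1))$), which pins down the block–antidiagonal shape; here $B$ ranges over a coset of the diagonal torus $V_2$. Writing $B = \Lambda V_2$ for a suitable diagonal $\Lambda$ and absorbing $V_2 \in Z \subseteq$ (or rather, modifying $J'$ by an element of $C_n \subseteq G$ and by conjugation by $Z$), I would reduce to the case $J' = J = \smallmat{0}{J_2}{-J_2}{0}$. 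The point is that the conjugacy action of $Z \simeq \Unitary(1)^2$ on the coset $JZ$ is transitive enough: conjugating $J$ by $\mathrm{diag}(V_2,\overline{V_2})$ rescales the antidiagonal block by $V_2 \overline{V_2}^{-1} = V_2^2$ (entrywise), and one checks this suffices to move any valid $J'$ into the $G$-orbit of $J$ after also using multiplication by elements of $C_n \subseteq G$ to adjust phases; I would need to verify that the resulting representative still satisfies $J^2 \in G^0$ and that no obstruction (a sign or a root of unity not already in $C_n$) survives. Once $J \in (\text{a conjugate of})\, G$, we get $G \supseteq \langle G^0, \zeta_n, J\rangle = J(C_n)$, and since $[N:Z]=2$ forces $G = (G\cap Z)\cup J(G\cap Z)$ with $G\cap Z$ conjugate to $C_n$, equality $G \cong J(C_n)$ follows. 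I should also note that conjugation used to normalize $J'$ may also move $C_n$, but since all lifts of the order-$n$ subgroup of $\Unitary(1)$ to $Z$ are conjugate under $Z$, this can be arranged simultaneously.

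The main obstacle I anticipate is the normalization step for the off-diagonal coset representative: ensuring that after conjugating by $Z$ and multiplying by elements of $C_n$, one genuinely lands on $J$ itself (with the specific $J_2 = \mathrm{diag}(1,-1)$) rather than on a twisted variant $\smallmat{0}{\Lambda J_2}{-\Lambda J_2}{0}$ with $\Lambda$ a diagonal matrix of roots of unity that cannot be cleared. One must check that any such surviving twist either lies in $C_n$ already or would violate (ST3) when combined with the torus — i.e., that the rationality condition applied to the nonidentity component $J'Z$ eliminates the bad cases, paralleling the computation in the previous lemma. I would compute the average of $|\Trace(\theta(u)J')|$ and of $\Trace$ of fourth powers over the component $J'G^0$ and show it forces $\Lambda$ into the group already generated, thereby collapsing everything to $J(C_n)$ up to conjugation.
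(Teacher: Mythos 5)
Your proposal is correct and follows essentially the same route as the paper: identify $G\cap Z$ with $C_n$, $n\in\{1,2,3,4,6\}$, via the preceding lemma, then conjugate a representative of the nontrivial coset onto $J$ by an element of the normalizer that preserves $Z$. The obstacle you anticipate in the normalization step does not actually arise: any $J'=\smallmat{0}{B}{-\overline B}{0}\in JZ$ (with $B$ diagonal) is sent by conjugation with $\mathrm{diag}(V_2,\overline V_2)\in Z$ to $\smallmat{0}{V_2^2B}{-\overline{V_2^2B}}{0}$, and since squaring is surjective on $\Unitary(1)$ one reaches $B=J_2$ by conjugation by $Z$ alone, which---$Z$ being abelian---fixes $C_n$ pointwise, so no residual twist $\Lambda$ survives and no appeal to (ST3) on the odd component is needed. (The paper accomplishes the same normalization by conjugating with an explicit antidiagonal $W\in JZ$ built from square roots of $v_1,v_2$; either choice works.)
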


\begin{proof}
By hypothesis, $G$ contains an element of $JZ$, which is of the form
\[
JV=\begin{pmatrix} 0 & J_2V_2 \\ -J_2\overline V_2 & 0 \end{pmatrix}\,,\qquad \text{where }\quad J_2V_2=\begin{pmatrix}
v_1 & 0 \\ 0 & -v_2 
\end{pmatrix}\,,
\]
where $v_1$ and $v_2$ are in $\Unitary(1)$. The conjugate of $JV$ by the matrix
\[
W:=\begin{pmatrix} 0 & W_2 \\ -\overline W_2 & 0 \end{pmatrix}\,,\qquad W_2:=\begin{pmatrix}
-\sqrt {v_1} & 0 \\ 0 & \sqrt{v_2} 
\end{pmatrix}
\]
is $J$. Thus the conjugate of $G$ by $W$ is of the form $H\rtimes \langle J\rangle$, where $H$ is a subgroup of $Z$ satisfying the rationality condition. As we have already seen, $H$ must be equal to $C_n$ for some $n\in\{1,2,3,4,6\}$.
\end{proof}

\subsection{The case $G^0=\SU(2)$}

To treat the case $G^0=\SU(2)$, 
we consider the standard representation of $\SU(2)$ on $\C^2$
and take the embedding of $\SU(2)$ in $\USp(4)$ corresponding to the representation $\operatorname{Sym}^ 3(\C^ 2)$. More explicitly, if $a,b\in\C$ are such that $a\overline a+ b\overline b= 1$, we consider the embedding of $\SU(2)$ in $\USp(4)$ given by
\begin{equation}\label{equation: embedSU2}
\begin{pmatrix}
a & b\\-\overline b & \overline a
\end{pmatrix}
\mapsto
\begin{pmatrix}
a^3 & a^2b & ab^2 & b^3\\
-3a^2\overline b & a^2\overline a-2 a b\overline b & 2a\overline a b- b^2\overline b & 3 \overline a b^2 \\
3a\overline b^2 & b\overline b^2-2a\overline a \overline b & a\overline a^2-2\overline a b \overline b & 3 \overline a^2 b \\
-\overline b^3 & \overline a\overline b^2 & -\overline a^2\overline b & \overline a^3 
\end{pmatrix}.
\end{equation} 
In this section, the Hodge circle is the image of the homomorphism
\begin{equation}\label{equation: HodgeSU2}
\theta\colon \Unitary(1) \to \USp(4)\,,\qquad \theta(u) :=\begin{pmatrix} U & 0 \\ 0 & \overline u^4 U \end{pmatrix}\,,\qquad U:=\begin{pmatrix}u^3 & 0\\ 0 & u\end{pmatrix}\,,
\end{equation} and we assume that the symplectic and Hermitian forms preserved by $\USp(4)$ are respectively given by the matrices
\[
S := \begin{pmatrix} 0 & 0 & 0 & z \\
0 & 0 & -1/z & 0\\
0 & 1/z & 0 & 0\\
-z & 0 & 0 & 0
\end{pmatrix}\,,
\qquad
M := \begin{pmatrix} 1/z & 0 & 0 & 0 \\
0 & z & 0 & 0\\
0 & 0 & z & 0\\
0 & 0 & 0 & 1/z
\end{pmatrix}\,,
\]
where $z=\sqrt 3$. Since the embedded $\SU(2)$ contains the embedded $\Unitary(1)$ of the previous section, the centralizer $Z$ of $G^0$ in $\USp(4)$ consists of matrices of the form \eqref{equation: centralizerU1}. Imposing the condition that conjugation by such a matrix preserves any element of the embedded $\SU(2)$, one finds that $v_1=v_2=\overline v_1=\overline v_2$. Thus $Z=\{\pm \Id\} \subseteq G^0$. The group $N/G^0 = N/(ZG^0)$
embeds into the group of continuous outer automorphisms $\Out^{\text{cont}}(\SU(2))$, which is trivial;
consequently, this case yields only the single group $D := G^0$.


\subsection{The case $G^0=\Unitary(2)$}\label{section: Unitary(2)}

To treat the case $G^0 = \Unitary(2)$, 
we again assume that the symplectic form preserved by $\USp(4)$ is given by the matrix
\[
S := \begin{pmatrix} 0 & \Id_2 \\
-\Id_2 & 0
\end{pmatrix}\,.
\]
The group $\Unitary(2)$ embeds into $\USp(4)$ via the map given in block form by
\[
A \mapsto \begin{pmatrix} A & 0 \\ 0 & \overline{A} \end{pmatrix},
\]
as in \cite[(3.1)]{FKRS12}. As indicated in \cite[\S 3]{FKRS12}, we have $Z=\{\pm \Id\} \subseteq G^0$ 
and $N = \Unitary(2) \cup J(\Unitary(2))$ for 
\[
J := \begin{pmatrix} 0 & J_2 \\ -J_2 & 0 \end{pmatrix}, \qquad
J_2 := \begin{pmatrix} 0 & 1 \\ -1 & 0 \end{pmatrix}.
\]
We thus obtain two groups: $\Unitary(2)$ and $N(\Unitary(2))$.

\begin{remark} \label{R:why not unitary2}
Note that $\Unitary(2)$ is missing from 
\cite[Theorem~3.4]{FKRS12} even though it satisfies the Sato-Tate axioms as formulated in
\cite[Definition~3.1]{FKRS12}. The reason is that axiom (ST2) is stated incorrectly there:
it fails to include the condition that the Hodge circles generate a dense subgroup of $G^0$; see \cite[8.2.3.6(i)]{Ser12}. 

Let us see this point more explicitly. 
Let $\theta\colon \Unitary(1) \to \Unitary(2)$ be a continuous homomorphism. 
The map
$\Unitary(1) \times \SU(2) \to \Unitary(2)$ taking $(u,A)$ to $u A$ is an isogeny of degree $2$
with kernel generated by $(-1, -\Id_2)$.
We may thus identify $\Unitary(2)/\SU(2)$ with $\Unitary(1)/\{\pm 1\}$
and then with $\Unitary(1)$ via the squaring map. There must then exist an integer $a$
such that for all $u \in \Unitary(1)$, the image of $\theta(u)$ in $\Unitary(1)$ is $u^a$.
The formula $u \mapsto u^{-a} \theta(u)^2$ defines a homomorphism $\Unitary(1) \to \SU(2)$, so there must exist
an integer $b$ such that for all $u \in \Unitary(1)$, 
the image of $u \in \Unitary(1)$ in $\SU(2)$ has eigenvalues $u^b$ and $ u^{-b}$.
The eigenvalues of $\theta(u^2)$ must then be $u^{a+2b}$ and $u^{a-2b}$.
If we then embed $\Unitary(2)$ into $\USp(4)$, the image of $\theta(u^2)$ has eigenvalues
$u^{a+2b}, u^{a-2b}, u^{-a+2b}, u^{-a-2b}$. 

In this paper, we get a Hodge circle by taking $\theta$ as above with $a = 4, b = 1$.
By contrast, in the setting of \cite{FKRS12},
the eigenvalues must be $u^2, u^2, u^{-2}, u^{-2}$, in some order.  We may assume without loss of generality
that $a+2b = 2$; we must then have $a - 2b \in \{-2, 2\}$, implying that either $a=0$ or $b=0$.
If $a=0$, then the conjugates of the image of $\theta$ all lie inside $\SU(2)$, and if $b=0$,
then the conjugates all lie inside $\Unitary(1)$. Thus no Hodge circle can exist.
\end{remark}

\subsection{The remaining cases for $G^0$}

We now treat the remaining cases for $G^0$. These turn out to give exactly the same answers as in
\cite[\S 3.6]{FKRS12}, modulo the position of the Hodge circle, which we will ignore
(see Remark~\ref{remark:embeddings of Hodge circles}); it thus suffices to recall these answers briefly.
The case $G^0=\USp(4)$ is trivial, so we focus on the split cases.
As in \cite[\S 3.6]{FKRS12},
we assume that the symplectic form preserved by $\USp(4)$ is defined by the block matrix
\[
S := \begin{pmatrix} S_2 & 0 \\
0 & S_2
\end{pmatrix}\,,
\qquad
S_2:= \begin{pmatrix} 0 & 1 \\
-1 & 0
\end{pmatrix}\,,
\]
and that product groups are embedded compatibly with this decomposition of the symplectic form.

For $G^0=\SU(2)\times\SU(2)$, as in \cite[\S 3.6]{FKRS12} we have the group $G_{3,3}:=G^0$ itself and its normalizer $N(G_{3,3})$, obtained by adjoining to $G^0$ the matrix
\[
\begin{pmatrix} 0 & S_2 \\ -S_2 & 0 \end{pmatrix}\,.
\]

For $G^0=\Unitary(1)\times\Unitary(1)$, the normalizer in $\USp(4)$ contains $\Unitary(1) \times \Unitary(1)$ with index~8, and the quotient is isomorphic to the dihedral group $\dih 4$ and generated by
matrices
\[
a := \begin{pmatrix} S_2 & 0 \\ 0 & \Id_2 \end{pmatrix}\,, \qquad
b := \begin{pmatrix} \Id_2 & 0 \\ 0 & S_2 \end{pmatrix}\,, \qquad
c := \begin{pmatrix} 0 & \Id_2 \\ -\Id_2 & 0 \end{pmatrix}\,,
\]
each of which defines an involution on the component group. We write $F_S$
for the group generated by $G^0$ and a subset $S$ of $\langle a,b,c\rangle$. As in \cite[\S 3.6]{FKRS12}, up to conjugation we obtain eight groups
\[
F_\nothing, F_a, F_c,
F_{a,b}, F_{ab}, F_{ac}, F_{ab,c}, F_{a,b,c}.
\]
For $G^0=\Unitary(1)\times\SU(2)$, we obtain the group
$G_{1,3} := \Unitary(1) \times \SU(2)$ and its normalizer $N(G_{1,3})= \langle G_{1,3}, a\rangle$.



%

\begin{remark} \label{remark:embeddings of Hodge circles}
Note that in some of the cases with $G^0 = \Unitary(1) \times \Unitary(1)$, there is more than one way to
embed the Hodge circle $H$ into $G$ up to conjugation. This is irrelevant for questions of equidistribution, but
it does matter when one attempts to relate the Sato-Tate group of a motive with the real endomorphism algebra of its Hodge structure (as in \cite[\S 4]{FKRS12}). Since we will not attempt that step in this paper at more than a heuristic level, we have chosen to ignore this ambiguity.
\end{remark}

\section{Testing the generalized Sato-Tate conjecture}\label{section: moments}

In the sections that follow, we describe various explicit constructions that give rise to self-dual 3-motives with Hodge numbers $h^{3,0} = h^{2,1} = h^{1,2} = h^{0,3} = 1$ and rational coefficients.
For each of these motives $M$, we then perform numerical tests of the generalized Sato-Tate conjecture by comparing
the distribution of the normalized $L$-polynomials of $M$ with the distribution of characteristic polynomials in one of the candidate Sato-Tate groups $G$ found by the classification in~\S\ref{section: classification}.
More precisely, we ask whether the normalized $L$-polynomials  of $M$ appear to be equidistributed with respect to the image of the Haar measure under the map $G \to \Conj(\USp(4))$, where $\Conj$ denotes the space of conjugacy classes.
To make this determination, we compare \emph{moment statistics} of the motive~$M$ to \emph{moment sequences} associated to~$G$,  as described below.

Table \ref{table:STgroups} lists invariants that allow us to distinguish the groups $G$. As in \cite{FKRS12}, $d$ denotes the real dimension of $G$; $c$ is the number $|G/G^0|$ of connected components of $G$; and $z_1$ and $z_2$ are defined by
$$
z_1:=z_{1,0}
\,,\qquad 
z_2:=[z_{2,-2},z_{2,-1},z_{2,0},z_{2,1},z_{2,2}]\,,
$$ 
where $z_{i,j}$ is the number of connected components of $G$ for which the $i$th coefficient $a_i$ of he characteristic polynomial of each of its elements is equal to the integer $j$.  We use $[G/G^0]$ to denote the isomorphism class of the component group of $G$, and the notations  $\cyc n$ and $\dih n$ indicate the cyclic group of $n$ elements and the dihedral group of $2n$ elements, respectively.
For each of the motives $M$ constructed in the sections that follow, the nature of the construction allows us to predict the type of identity component and the number of components, as well as the values of the invariants $z_1$ and~$z_2$, which is enough to uniquely determine a candidate Sato-Tate group $G$.
The last column of Table~\ref{table:STgroups} references the example motives $M$ whose candidate Sato-Tate group is $G$.
For all but one group ($F_{a,b,c}$) there is at least one such example, and in many cases there are multiple constructions that lead to the same candidate Sato-Tate group.

\begin{table}
\begin{center}
\setlength{\extrarowheight}{0.5pt}
\caption{Candidate Sato-Tate groups of self-dual motives of weight 3 with
Hodge numbers $h^{3,0} = h^{2,1} = h^{1,2} = h^{0,3} = 1$ and rational coefficients. The final column indicates where within the article
to find explicit constructions that yield matching moment statistics.} \label{table:STgroups}
\vspace{12pt}
\begin{tabular}{rrllrrr}
$d$ & $c$  & $G$          & $[G/G^0]$              &$z_1$ & $\qquad\qquad z_2$ & Examples \\\hline\vspace{-10pt}\\
$1$ & $1$  & $C_1$        & $\cyc{1}$              &  $0$ & $0,0,0,0,0$ & \ref{example:mfsum_C1}, \ref{example:eccube_C1}, \ref{example:mfprod_C1} \\
$1$ & $2$  & $C_2$        & $\cyc{2}$              &  $0$ & $0,0,0,0,0$ & \ref{example:mfsum_C2} \\
$1$ & $3$  & $C_3$        & $\cyc{3}$              &  $0$ & $0,0,0,0,0$ & \ref{example:mfsum_C3}, \ref{example:ecprod_C3}\\
$1$ & $4$  & $C_4$        & $\cyc{4}$              &  $0$ & $0,0,0,0,0$ & \ref{example:mfsum_C4} \\
$1$ & $6$  & $C_6$        & $\cyc{6}$              &  $0$ & $0,0,0,0,0$ &  \ref{example:mfsum_C6} \\
$1$ & $2$  & $J(C_1)$     & $\dih{1}$              &  $1$ & $0,0,0,0,1$ & \ref{example:mfsum_C1}, \ref{example:eccube_C1}, \ref{example:mfprod_C1} \\
$1$ & $4$  & $J(C_2)$     & $\dih{2}$              &  $2$ & $0,0,0,0,2$ & \ref{example:mfsum_C2} \\
$1$ & $6$  & $J(C_3)$     & $\dih{3}$              &  $3$ & $0,0,0,0,3$ & \ref{example:mfsum_C3}\\
$1$ & $8$  & $J(C_4)$     & $\dih{4}$              &  $4$ & $0,0,0,0,4$ & \ref{example:mfsum_C4} \\
$1$ & $12$ & $J(C_6)$     & $\dih{6}$              &  $6$ & $0,0,0,0,6$ &  \ref{example:mfsum_C6} \\
$3$ & $1$ & $D$           & $\cyc 1$               &  $0$ & $0,0,0,0,0$ & \ref{example:eccube_C1} \\
$4$ & $1$ & $\Unitary(2)$ & $\cyc 1$               &  $0$ & $0,0,0,0,0$ & \ref{example:ecprod_U2}, \ref{example:mfprod_U2} \\
$4$ & $2$ & $N(\Unitary(2))$ & $\cyc 2$            &  $1$ & $0,0,0,0,0$ &\ref{example:mfprod_U2}\\ 
$2$ & $1$  & $F_\nothing$ & $\cyc{1}$              &  $0$ & $0,0,0,0,0$ &\ref{example:mfsum_F}, \ref{example:ecprod_F}, \ref{example:mfprod_F} \\
$2$ & $2$  & $F_a$        & $\cyc{2}$              &  $0$ & $0,0,0,0,1$ & \ref{example:mfsum_F} \\
$2$ & $2$  & $F_c$        & $\cyc{2}$              &  $1$ & $0,0,0,0,0$ & \ref{example:mfprod_F}, \ref{example:ecprod_F} \\ 
$2$ & $2$  & $F_{ab}$     & $\cyc{2}$              &  $1$ & $0,0,0,0,1$ & \ref{example:ecprod_F} \\
$2$ & $4$  & $F_{ac}$     & $\cyc{4}$              &  $3$ & $0,0,2,0,1$ & \ref{example:Fac}\\
$2$ & $4$  & $F_{a,b}$    & $\dih{2}$              &  $1$ & $0,0,0,0,3$ & \ref{example:mfprod_F}, \ref{example:ecprod_F} \\
$2$ & $4$  & $F_{ab,c}$   & $\dih{2}$              &  $3$ & $0,0,0,0,1$ &\ref{example:mfprod_F} \\ 
$2$ & $8$  & $F_{a,b,c}$  & $\dih{4}$              &  $5$ & $0,0,2,0,3$ & None (but see \ref{example:hecke_char})\\  
$4$ & $1$  & $G_{1,3}$    & $\cyc{1}$              &  $0$ & $0,0,0,0,0$ & \ref{example:mfsum_G13} \\
$4$ & $2$  & $N(G_{1,3})$ & $\cyc{2}$              &  $0$ & $0,0,0,0,1$ & \ref{example:mfsum_G13} \\
$6$ & $1$  & $G_{3,3}$    & $\cyc{1}$              &  $0$ & $0,0,0,0,0$ & \ref{example:mfsum_G33} \\
$6$ & $2$  & $N(G_{3,3})$ & $\cyc{2}$              &  $1$ & $0,0,0,0,0$ & 
\ref{example:hmf}\\
$10$& $1$  & $\USp(4)$    & $\cyc{1}$              &  $0$ & $0,0,0,0,0$ & \ref{example:USp4} \\
\hline
\end{tabular}
\end{center}
\end{table}

\subsection{Experimental methodology --- moment statistics}

All of the motives $M/K$ that we consider have $L$-polynomials of the form
\begin{equation}\label{eq:Lpoly}
L_{\fp}(T) = p^6T^4 + c_1p^3T^3 + c_2pT^2+c_1T + 1,
\end{equation}
where $\fp$ is a prime of $K$ of good reduction for $M$, $p=N(\fp)$ is its absolute norm, and $c_1$ and $c_2$ are integers satisfying the Weil bounds $|c_1|\le 4p^{3/2}$ and $|c_2| \le 6p^2$ (in fact $c_2 \ge -2p^2$).
For the purpose of computing moment statistics we may restrict our attention to primes $\fp$ of degree 1, so we assume that $p$ is prime.
Note that $c_1$ is the \emph{negation} of the trace of Frobenius, and $c_2$ is obtained by \emph{removing a factor} of $p$ from the coefficient of $T^2$ in $L_\fp(T)$.

The normalized $L$-polynomial coefficients of $M/K$ are then defined by
\begin{equation}\label{eq:a1a2}
a_1(\fp):=c_1/N(\fp)^{3/2}\qquad\text{and}\qquad a_2(\fp):= c_2/N(\fp)^2,
\end{equation}
which are real numbers in the intervals $[-4,4]$ and $[-2,6]$, respectively.

Given a norm bound $B$, we let $S(B)$ denote the set of degree 1 primes of $K$ with norm at most $B$, and for $i=1,2$ we define the $n$th \emph{moment statistic} of $a_i$ for the motive $M$ (with respect to $B$) by
\[
M_n[a_i] := \frac{1}{\#S(B)} \sum_{\fp\in S(B)} a_i(\fp)^n.
\]
Similarly, given a candidate Sato-Tate group $G$, we let $a_i:=a_i(g)$ denote the $i$th  coefficient of the characteristic polynomial of a random element $g$ of $G$ (according to the Haar measure).  We then let $M_n[a_i]$ denote the expected value of $a_i^n$; this is the $n$th \emph{moment} of $a_i$ for the group $G$, which is always an integer (see axiom (ST3) in \cite[Def.\ 3.1]{FKRS12}).
In what follows it will be clear from context whether $M_n[a_i]$ refers to a moment statistic of $M$ (with respect to a norm bound $B$) or a moment of $G$.

To test for equidistribution with respect to a candidate Sato-Tate group~$G$, for increasing values of $B$ we  compare moment statistics $M_n[a_i]$ for the motive~$M$ to the corresponding moments $M_n[a_i]$ of the group $G$ and ask whether the former appear to converge to the latter as $B$ increases.
As may be seen in the tables of moment statistics listed in \S\ref{section: tables}, in cases where it is computationally feasible to make $B$ sufficiently large (up to $2^{40}$), we see very strong evidence for convergence; the moment statistics of $M$ generally agree with the moments of~$G$ to within one part in ten thousand.

It should be noted that the correct statement of the generalized Sato-Tate conjecture is somewhat more precise than what we are testing here. It includes both
a defined group $G$ attached to the motive (the \emph{Sato-Tate group}) and a sequence of elements of $\Conj(G)$ 
that should be equidistributed
for the image of the Haar measure, even before projecting to $\Conj(\USp(4))$. 
The formulation in \cite[\S 2]{FKRS12} is only valid for motives of weight 1; for a reformulation in terms of absolute
Hodge cycles that applies to motives of any odd weight, see \cite{BK,BK2}.

Since we do not introduce the definition of the Sato-Tate group here, we do not attempt to verify in our examples that the candidate Sato-Tate group we identify actually coincides with the Sato-Tate group of the motive. It is unclear how difficult this is to achieve, especially for the motives appearing in the Dwork pencil. Moreover, we do not claim that our list of constructions is exhaustive.  It may (or may not) be that the group $N(G_{3,3})$, which we are unable to match with an explicit construction, can be realized by other methods
(compare Remark~\ref{remark:abelian surface groups}).

\subsection{Moment sequences of candidate Sato-Tate groups}

In this section we compute moment sequences associated to each of the subgroups $G$ of $\USp(4)$ encountered in \S\ref{section: classification}; these are listed in Tables~\ref{table:a1moments} and~\ref{table:a2moments}.
Let~$G$ be a compact subgroup of $\USp(4)$. For $i=1,2$, let $a_i:=a_i(g)$ denote the $i$th coefficient of the characteristic polynomial of a random element $g$ of $G$ (according to the Haar measure). For a nonnegative integer $n$, the $n$th moment $M_n[a_i]$ is the expected value of $a_i^n$.

We note that 13 of the 26 groups encountered in \S\ref{section: classification} already appeared in the classification of \cite{FKRS12}, and we do not need to compute their moments again. We proceed to the computation of the moment sequences for the restriction of~$a_i$ to every connected component of each of the remaining groups. Let $t$ (resp.~$s$) denote the trace of a random element in $\Unitary(1)$ (resp. $\SU(2)$). Recall that
\begin{equation}\label{equation: momg1}
M_{2n}[t]=\binom{2n}{n}\,,\qquad M_{2n}[s]=\frac{1}{n+1}\binom{2n}{n}\,,
\end{equation} 
whereas the odd moments are all zero in both cases.

\textbf{The group $D$.} In this case we have a single connected component, whose moments can be computed by noting that
\begin{align*}
M_{n}[a_1(g)\,|\,g\in D] &=\Exp[(-s^3+2s)^n]\,,\\
M_{n}[a_2(g)\,|\,g\in D] &=\Exp[(s^4-3s^2+2)^n]\,,
\end{align*}
and then applying the second equality in (\ref{equation: momg1}).

\textbf{The groups $\Unitary(2)$ and $N(\Unitary(2))$.} We can use the isomorphism $\Unitary(2)\simeq\Unitary(1)\times\SU(2)/\langle-1\rangle$ to deduce that 
\begin{align*}
M_n[a_1(g)\,|\,g\in\Unitary(2)] &= \Exp[(-t·s)^n]\,,\\
M_n[a_2(g)\,|\,g\in\Unitary(2)] &= \Exp[(s^2+t^2-2)^n]\,,
\end{align*}
and, if $J$ is as in \S\ref{section: Unitary(2)}, that
\begin{align*}
M_n[a_1(g)\,|\,g\in J\Unitary(2)] &= 0\,,\\
M_n[a_2(g)\,|\,g\in J\Unitary(2)] &= \Exp[(-s^2+2)^n]\,.
\end{align*}

\textbf{The groups $C_n$ and $J(C_n)$.} We have $a_1(g)=0$ and $a_2(g)=2$ for any element $g$ in the connected component of $\zeta_m^kJ$ (where $\zeta_m$ and $J$ are as in \S\ref{section: unitary}). Let $C(\zeta_m^k)$ denote the connected component of the matrix $\zeta_m^k$. Then

\begin{align*}
M_{n}[a_1(g)\,|\,g\in C(\zeta_m^k)] &=\frac{2^{n-1}}{\pi}\int_0^{2\pi} \left(\cos\left(3r+\frac{2\pi k}{m}\right)+\cos(r)\right)^ndr\,,\\
M_{n}[a_2(g)\,|\,g\in C(\zeta_m^k)] &=\frac{2^{n-1}}{\pi}\int_0^{2\pi} \left(1+\cos\left(4r+\frac{2\pi k}{m}\right)+\cos\left(2r+\frac{2\pi k}{m}\right)\right)^ndr\,.
\end{align*}


\begin{table}
\begin{center}
\setlength{\extrarowheight}{0.5pt}
\caption{Moments $M_n=M_n[a_1]$ for the groups listed in Table~\ref{table:STgroups}.}\label{table:a1moments}
\vspace{8pt}
\begin{tabular}{lrrrrrrrrrrr}
$G$ &  $M_2$ & $M_4$ & $M_6$ & $M_8$ & $M_{10}$ & $M_{12}$ & $M_{14}$ & $M_{16}$ \\\hline\vspace{-8pt}\\
$C_1$ & 4 & 44 & 580 & 8092 & 116304 & 1703636 & 25288120 & 379061020\\
$C_2$ & 4 & 36 & 400 & 4956 & 65904 & 919116 & 13236080 & 194789660\\
$C_3$ & 4 & 36 & 400 & 4900 & 63504 & 854216 & 11806652 & 166685220\\
$C_4$ & 4 & 36 & 400 & 4900 & 63504 & 853776 & 11778624 & 165640540\\
$C_6$ & 4 & 36 & 400 & 4900 & 63504 & 853776 & 11778624 & 165636900\\
$J(C_1)$ & 2 & 22 & 290 & 4046 & 58152 & 851818 & 12644060 & 189530510\\
$J(C_2)$ & 2 & 18 & 200 & 2478 & 32952 & 459558 & 6618040 & 97394830\\
$J(C_3)$ & 2 & 18 & 200 & 2450 & 31752 & 427108 & 5903326 & 83342610\\
$J(C_4)$ & 2 & 18 & 200 & 2450 & 31752 & 426888 & 5889312 & 82820270 \\
$J(C_6)$ & 2 & 18 & 200 & 2450 & 31752 & 426888 & 5889312 & 82818450  \\
$D$ & 1 & 4 & 34 & 364 & 4269 & 52844 & 679172 & 8976188 \\ 
$\Unitary(2)$  & 2 & 12 & 100 & 980 & 10584 & 121968 & 1472328 &
18404100  \\
$N(\Unitary(2))$ & 1 & 6 & 50 & 490 & 5292 & 60984 & 736164 & 9202050\\
$F$ & 4 & 36 & 400 & 4900 & 63504 & 853776 & 11778624 & 165636900\\
$F_a$ & 3 & 21 & 210 & 2485 & 31878 & 427350 & 5891028 & 82824885\\
$F_c$ & 2 & 18 & 200 & 2450 & 31752 & 426888 & 5889312 & 82818450\\
$F_{ab}$ & 2 & 18 & 200 & 2450 & 31752 & 426888 & 5889312 & 82818450\\
$F_{ac}$ & 1 & 9 & 100 & 1225 & 15876 & 213444 & 2944656 & 41409225\\
$F_{a,b}$ & 2 & 12 & 110 & 1260 & 16002 & 213906 & 2946372 & 41415660\\
$F_{ab,c}$ & 1 & 9 & 100 & 1225 & 15876 & 213444 & 2944656 & 41409225\\
$F_{a,b,c}$ & 1 & 6 & 55 & 630 & 8001 & 106953 & 1473186 & 20707830\\
$G_{1,3}$ & 3 & 20 & 175 & 1764 & 19404 & 226512 & 2760615 & 34763300\\
$N(G_{1,3})$ & 2 & 11 & 90 & 889 & 9723 & 113322 & 1380522 & 17382365\\
$G_{3,3}$ & 2 & 10 & 70 & 588 & 5544 & 56628 & 613470 & 6952660\\
$N(G_{3,3})$ & 1 & 5 & 35 & 294 & 2772 & 28314 & 306735 & 3476330\\
$\USp(4)$  & 1 & 3 & 14 & 84 & 594 & 4719 & 40898 & 379236\\\hline
\end{tabular}
\end{center}
\end{table}

\begin{table}
\begin{center}
\setlength{\extrarowheight}{0.5pt}
\caption{Moments of $M_n=M_n[a_2]$ for the groups listed in Table~\ref{table:STgroups}.}\label{table:a2moments}
\vspace{12pt}
\begin{tabular}{lrrrrrrrrrrr}
$G$ &  $M_1$ & $M_2$ & $M_3$ & $M_4$ & $M_{5}$ & $M_{6}$ & $M_{7}$ & $M_{8}$ & $M_{9}$\\\hline\vspace{-8pt}\\
$C_1$ & 2 & 8 & 38 & 196 & 1052 & 5774 & 32146 & 180772 & 1024256\\
$C_2$ & 2 & 8 & 32 & 148 & 712 & 3614 & 18916 & 101700 & 557384\\ 
$C_3$ & 2 & 8 & 32 & 148 & 712 & 3584 & 18496 & 97444 & 521264\\ 
$C_4$ & 2 & 8 & 32 & 148 & 712 & 3584 & 18496 & 97444 & 521096\\ 
$C_6$ & 2 & 8 & 32 & 148 & 712 & 3584 & 18496 & 97444 & 521096\\ 
$J(C_1)$ & 2 & 6 & 23 & 106 & 542 & 2919 & 16137 & 90514 & 512384\\ 
$J(C_2)$  & 2 & 6 & 20 & 82 & 372 & 1839 & 9522 & 50978 & 278948\\
$J(C_3)$ & 2 & 6 & 20 & 82 & 372 & 1824 & 9312 & 48850 & 260888\\
$J(C_4)$ & 2 & 6 & 20 & 82 & 372 & 1824 & 9312 & 48850 & 260804\\
$J(C_6)$ & 2 & 6 & 20 & 82 & 372 & 1824 & 9312 & 48850 & 260804\\
$D$ & 1 & 2 & 5 & 16 & 62 & 272 & 1283 & 6316 & 31952\\
$\Unitary(2)$ & 1 & 4 & 11 & 44 & 172 & 752 & 3383 & 15892 & 76532\\
$N(\Unitary(2))$ & 1 & 3 & 7 & 25 & 91 & 386 & 1709 & 7981 & 38329\\
$F_\nothing$ & 2 & 8 & 32 & 148 & 712 & 3584 & 18496 & 97444 & 521096\\
$F_a$ & 2 & 6 & 20 & 82 & 372 & 1824 & 9312 & 48850 & 260804\\
$F_c$ & 1 & 5 & 16 & 77 & 356 & 1802 & 9248 & 48757 & 260548\\
$F_{ab}$ & 2 & 6 & 20 & 82 & 372 & 1824 & 9312 & 48850 & 260804\\
$F_{ac}$ & 1 & 3 & 10 & 41 & 186 & 912 & 4656 & 24425 & 130402\\
$F_{a,b}$ & 2 & 5 & 14 & 49 & 202 & 944 & 4720 & 24553 & 130658\\
$F_{ab,c}$ & 1 & 4 & 10 & 44 & 186 & 922 & 4656 & 24460 & 130402\\
$F_{a,b,c}$ & 1 & 3 & 7 & 26 & 101 & 477 & 2360 & 12294 & 65329\\
$G_{1,3}$ & 2 & 6 & 20 & 76 & 312 & 1364 & 6232 & 29460 & 142952\\
$N(G_{1,3})$ & 2 & 5 & 14 & 46 & 172 & 714 & 3180 & 14858 & 71732\\
$G_{3,3}$ & 2 & 5 & 14 & 44 & 152 & 569 & 2270 & 9524 & 41576\\
$N(G_{3,3})$ & 1 & 3 & 7 & 23 & 76 & 287 & 1135 & 4769 & 20788\\
$\USp(4)$  & 1 & 2 & 4 & 10 & 27 & 82 & 268 & 940 & 3476\\\hline 
\end{tabular}
\end{center}
\end{table}

\section{Modular forms and Hecke characters}\label{section: modular forms}

Modular forms and Hecke characters play a key role in many of our motive constructions.
Before giving explicit examples, we first recall some theoretical facts concerning modular forms with complex multiplication (CM), following the exposition given in \cite[Chap. II]{Sc06}. These facts allow us to actually prove equidistribution in several cases (see Lemma~\ref{lemma:corrmf}), and they facilitate our numerical computations (via Lemma~\ref{lemma:wt4cm}).
\bigskip

\noindent
\textbf{Notation}:
To avoid potential confusion with the normalized $L$-polynomial coefficients $a_1$ and $a_2$ (and the integer $L$-polynomial coefficients $c_1$ and $c_2$), we generally use $b_n$ (or~$d_n$ or $e_n$) to denote the Fourier coefficients of a modular form $f=f(z)=\sum b_nq^n$, where $q=\exp(2\pi i z)$.  Unless otherwise indicated, the symbols $\omega$ and $i$ denote, respectively, the third and fourth roots of unity in the upper half plane. 

When possible, we identify specific modular forms by their labels in the LMFDB database of $L$-functions, modular forms, and related objects \cite{LMFDB}. These identifiers are formatted as $N.k.cs$, where $N$ is the level, $k$ is the weight, $c$ is an index indicating the character, and $s$ is an alphabetic string that distinguishes the form from others of the same weight, level, and character. The trivial character is always indexed by the label $c=1$.

\subsection{Newforms with complex multiplication}

Let $S_k(\Gamma_1(N))$ denote the complex space of weight $k$ cusp forms for $\Gamma_1(N)$. There is a decomposition 
$$
S_k(\Gamma_1(N))=\bigoplus_\varepsilon S_k(\Gamma_0(N),\varepsilon),
$$
where $\varepsilon\colon (\Z/N\Z)^*\rightarrow \C^*$ runs over the characters of $(\Z/N\Z)^*$ and $S_k(\Gamma_0(N),\varepsilon)$ denotes the space of weight $k$ cusp forms for $\Gamma_0(N)$ with nebentypus $\varepsilon$. We denote by $S_k^{\mathrm{new}}(\Gamma_1(N))$ the complex subspace generated by the newforms. We say that $f=\sum_{n\geq 1}b_n q^n\in S_k(\Gamma_1(N))$ is a \emph{newform} if it is an eigenform for all the Hecke operators, it is new at level $N$, and it is normalized so that $b_1=1$.

The newform $f\in S_k(\Gamma_1(N))$ is said to have complex multiplication (CM) by a (quadratic) Dirichlet character $\chi$ if $b_p=\chi(p)b_p$ for a set of primes of density~$1$.
 
Let $K$ be a quadratic imaginary field, $\gM$ an ideal of $K$, and $l\in \N$. Let $I_{\gM}$ stand for the group of fractional ideals of $K$ coprime to $\gM$. A Hecke character of $K$ of modulus $\gM$ and infinite type $(l,0)$, or simply $l$, is a homomorphism
$$
\psi\colon I_{\gM}\rightarrow \C^*
$$
such that $\psi(\alpha\mathcal O_K)=\alpha^l$ for all $\alpha\in K^ *$ with\footnote{To simplify notation, we will simply write $\equiv$, but the reader should be aware that in this context we are alluding to multiplicative congruence by this sign.} $\alpha\equiv 1 \pmod{\gM}$.
We extend~$\psi$ by defining it to be $0$ for all fractional ideals of $K$ that are not coprime to $\gM$. We say that $\gM$ is the conductor of $\psi$ if the following holds: if $\psi$ is defined modulo $\gM'$, then $\gM|\gM'$. The $L$-function of $\psi$ is then defined by
$$
L(\psi,s):=\prod_{\fp}(1-\psi(\fp)N(\fp)^{-s})^{-1}\,,
$$
where the product runs over all prime ideals of $K$. Let $\Delta_K$ denote the absolute value of the discriminant of $K$ and let $\chi_K$ denote the Dirichlet character associated to $K$. By results of Hecke and Shimura, the inverse Mellin transform 
$$
f_{\psi}:=\sum_{\mathfrak a\subseteq \mathcal O_K}\psi(\mathfrak a)q^ {N(\mathfrak a)}=:\sum_{n\geq 1} b_nq^n
$$
of $L(\psi,s)$ is an eigenform of weight $l+1$, level $\Delta_KN(\gM)$, and nebentypus $\chi_K\eta$, where
$$
\eta(n)=\frac{\psi(n\mathcal O_K)}{n^ l}\quad\text{ if }(n,N(\gM))=1\,,
$$
and $\eta(n)=0$, otherwise.
Moreover, $f_\psi$ is new at this level if and only if $\gM$ is the conductor of $\psi$ and, by construction, we have $b_n=\chi_K(n)b_n$. Thus the modular form $f_\psi$ has CM by $\chi_K$ (we also say that $f_\psi$ has CM by $K$).
It follows from results of Ribet that every CM newform in $S_k(\Gamma_1(N))$ arises in this way; see Proposition~4.4 and Theorem 4.5 in \cite{Ri77}.

In this article we only consider newforms with rational coefficients. The following result describes the nebentypus in this case.
\begin{proposition}[\cite{Sc06}, Cor. II.1.2]\label{proposition: nebentypus}
Let $f\in S_k(\Gamma_1(N))$ be a newform with real coefficients.
\begin{enumerate}[i)]
\item If $k$ is even then the nebentypus $\varepsilon$ is trivial. 
\item If $k$ is odd then the nebentypus $\varepsilon$ is quadratic and $f$ has CM by $\varepsilon$.
\end{enumerate}
\end{proposition}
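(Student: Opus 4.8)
The plan is to reduce the proposition to three standard facts about a newform $f=\sum_n b_nq^n$ of weight $k$, level $N$, and nebentypus $\varepsilon$. First, $\varepsilon(-1)=(-1)^k$ (otherwise the relevant space of cusp forms vanishes). Second, applying an automorphism $\sigma$ of $\C$ to the Fourier expansion sends $f$ to a newform with nebentypus $\varepsilon^\sigma$; in particular, taking $\sigma$ to be complex conjugation, the conjugate newform $\bar f$ has nebentypus $\bar\varepsilon$. Third, $\overline{b_n}=\bar\varepsilon(n)\,b_n$ for every $n$ coprime to $N$ (adjointness of the Hecke operators under the Petersson inner product). I will also use the description of CM newforms recalled above, in particular Ribet's theorem that a CM newform is of the form $f_\psi$ for a Hecke character $\psi$ of an imaginary quadratic field $K$ and has CM by $\chi_K$.

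First I would exploit the reality of the $b_n$: then $\bar f=f$, and since a newform determines its nebentypus (through the diamond operators $\langle d\rangle$), the second fact forces $\varepsilon=\bar\varepsilon$. Hence $\varepsilon$ is real-valued, so $\varepsilon^2=1$ and $\varepsilon$ is either trivial or quadratic. Feeding $\varepsilon=\bar\varepsilon$ and $\overline{b_n}=b_n$ into the third fact gives $b_p=\varepsilon(p)\,b_p$ for all $p\nmid N$; thus $b_p=0$ whenever $\varepsilon(p)=-1$, i.e.\ $f$ has CM by $\varepsilon$ as soon as $\varepsilon$ is nontrivial.

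Part (ii) is then immediate: if $k$ is odd, the first fact gives $\varepsilon(-1)=-1$, so $\varepsilon$ is nontrivial, hence quadratic, and by the previous step $f$ has CM by $\varepsilon$. For part (i), suppose $k$ is even but $\varepsilon\neq 1$. Then $\varepsilon$ is quadratic and $f$ has CM by $\varepsilon$, so by Ribet's theorem $f=f_\psi$ for a Hecke character $\psi$ of an imaginary quadratic field $K$; for $p\nmid N$ one has $b_p\neq 0$ exactly when $p$ splits in $K$. Combined with $b_p=\varepsilon(p)b_p$, this shows $\varepsilon(p)=1$ for every prime $p\nmid N$ split in $K$. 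A short argument with Dirichlet characters (both $\varepsilon$ and $\chi_K$ factor through $(\Z/m\Z)^*$ for $m=\lcm(N,\Delta_K)$; the primes split in $K$ fill every residue class in the index-$2$ subgroup $\ker\chi_K$; and $\varepsilon^2=\chi_K^2=1$) then forces $\varepsilon\in\{1,\chi_K\}$. Since $\varepsilon\neq1$ by assumption, $\varepsilon=\chi_K$; but $K$ is imaginary, so $\chi_K(-1)=-1$, contradicting $\varepsilon(-1)=(-1)^k=1$. Hence $\varepsilon=1$.

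The only place the parity of $k$ does genuine work --- and the step I expect to be the crux --- is this last one: ruling out an even nontrivial quadratic nebentypus. It rests entirely on the fact that a CM newform has CM by the quadratic character of an \emph{imaginary} quadratic field (equivalently, that the CM character is odd), which is precisely where the classification of CM forms via Ribet is used; everything else is bookkeeping with $\varepsilon(-1)=(-1)^k$ and the adjointness relation $\overline{b_n}=\bar\varepsilon(n)b_n$.
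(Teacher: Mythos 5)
The paper does not prove this proposition; it imports it from Sch\"utt's thesis (\cite{Sc06}, Cor.~II.1.2), so there is no internal proof to compare against. Your argument is the standard one and is essentially correct: the three inputs you list ($\varepsilon(-1)=(-1)^k$, the action of $\Aut(\C)$ on newforms, and $\overline{b_n}=\bar\varepsilon(n)b_n$), combined with the reality of the coefficients, give $\varepsilon=\bar\varepsilon$ and $b_n=\varepsilon(n)b_n$, which settles (ii) and reduces (i) to excluding a nontrivial (hence quadratic) even $\varepsilon$.

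Two remarks on that last step. First, the intermediate claim that for $f=f_\psi$ one has $b_p\neq 0$ exactly when $p$ splits in $K$ is false as stated: for split $p=\fp\overline{\fp}$ one has $b_p=\psi(\fp)+\psi(\overline{\fp})$, which can vanish (only the implication ``inert $\Rightarrow$ $b_p=0$'' is automatic). Your argument survives because the split primes with $b_p=0$ have density zero, so the split primes with $b_p\neq 0$ still meet every residue class in $\ker\chi_K$ modulo $m$; but as written the step needs this repair. Second, the detour is unnecessary: Proposition~4.4 of \cite{Ri77}, which you are already invoking via Ribet's classification, says directly that if a newform of weight at least $2$ satisfies $f=f\otimes\phi$ for a nontrivial character $\phi$, then $\phi$ is the quadratic character attached to an \emph{imaginary} quadratic field, so $\phi(-1)=-1$. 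Applied with $\phi=\varepsilon$, this immediately contradicts $\varepsilon(-1)=(-1)^k=1$ for $k$ even, with no need to identify $\varepsilon$ with $\chi_K$ by chasing split primes.
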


To ease notation, when the nebentypus is trivial, we simply write $S_k(N)$ in place of $S_k(\Gamma_0(N),\varepsilon_{\rm{triv}})$ and we use $S_k^{\mathrm{new}}(N)$ to denote the subspace of $S_k(N)$ generated by newforms.

We now describe two constructions that play a key role in what follows.
These involve certain weight 4 newforms with CM by $K=\Q(i)$ or $K=\Q(\omega)$, and twists of these forms by a quartic or sextic character (respectively).  We first recall two definitions.

Let $K=\Q(i)$. The biquadratic residue symbol of $\alpha\in\mathcal O_K=\Z[i]$ is the homomorphism
$$
\rs{\alpha}{\cdot}_4\colon I_{((1+i)\alpha)}\rightarrow \mathcal O_K^ *=\langle i\rangle
$$
uniquely characterized by the property that
$$
\alpha^{(N(\fp)-1)/4}\equiv \rs{\alpha}{\fp}_4\, (\bmod \,\fp)\,.
$$
Using biquadratic reciprocity, one can show that this is a Hecke character of infinite type $0$.
We define $\rs{\alpha}{\cdot}_4$ to be zero at fractional ideals of $K$ that are not coprime to $(i+1)\alpha$.

Now let $K=\Q(\omega)$. The sextic residue symbol of $\alpha\in\mathcal O_K=\Z\oplus \omega \Z$ is the homomorphism
$$
\rs{\alpha}{\cdot}_6\colon I_{(2\sqrt{-3}\alpha)}\rightarrow \mathcal O_K^ *=\langle \omega\rangle
$$
uniquely characterized by the property that
$$
\alpha^{(N(\fp)-1)/6}\equiv \rs{\alpha}{\fp}_6\, (\bmod \,\fp)\,.
$$
Using cubic reciprocity, one can show that it is also a Hecke character of infinite type $0$.
We define $\rs{\alpha}{\cdot}_6$ to be zero at fractional ideals of~$K$ that are not coprime to $2\sqrt{-3}\alpha$.

\subsection{CM newforms of weights 3 and 4 with a quartic twist}\label{subsection: psi-4}
Let $K=\Q(i)$. For any prime ideal $\fp$ of $K$ there exists $\alpha_\fp\in \mathcal O_K$ such that $\fp=(\alpha_\fp)$, and if $\fp$ is coprime to $1+i$, then by multiplying $\alpha_\fp$ by an element of $\mathcal O_K^*=\langle i \rangle$, we may assume that $\alpha_\fp\equiv 1\bmod (1+i)^3$. Moreover, this uniquely determines $\alpha_\fp$ (see \cite[Chap. 9, Lemma 7]{IR82}). Now define
$$
\psi(\fp):=\alpha_\fp\,.
$$
This is a Hecke character of infinite type $1$ and conductor $\gM=(1+i)^ 3$. 
By \cite[Chap. 18, \S4]{IR82}, this is the Hecke character attached to the elliptic curve $y^2=x^3-x$. The newform $f_\psi\in S_2^{\mathrm{new}}(32)$ has rational coefficients and LMFDB identifier \href{http://www.lmfdb.org/ModularForm/GL2/Q/holomorphic/32/2/1/a/}{32.2.1a}.

The Hecke character $\psi^3$ has infinite type $3$ and conductor $\gM=(1+i)^3$.
Thus $f_{\psi^3}$ is a newform in $S_4^{\mathrm{new}}(32)$, and its identifier is \href{http://www.lmfdb.org/ModularForm/GL2/Q/holomorphic/32/4/1/b/}{32.4.1b}.

Let $\phi:=\rs{3}{\cdot}_4$. The Hecke character $\psi^3\otimes \phi$ has infinite type $3$, but we do not necessarily know its conductor \emph{a priori}.
However, we may use the above recipe to compute $\psi$ and the first several Fourier coefficients of $f_{\psi^3\otimes \phi}=\sum_{n\geq 1}b_n q^n$; for primes $p\equiv 1\,\bmod 4$ with $3<p\leq 97$, we obtain
$$
\begin{array}{rrrrrrrrrrrr}\hline
p\colon & 5 & 13 & 17 & 29 & 37 & 41 & 53 & 61 & 73 & 89 & 97\\
b_p\colon & 4 & -18 & - 104 & 284 & -214 & -472 & 572 & -830 & -1098 & 176 & -594\\\hline
\end{array}
$$
Let $\chi\colon (\Z/24\Z)^*\rightarrow \C^*$ denote the quadratic Dirichlet character defined by
$$
\chi(n):=
\begin{cases}
1 & \text{if $n\equiv\, 1,\,7,\,17,\,23 \bmod 24$;}\\
-1 & \text{if $n\equiv\, 5,\,11,\,13,\,19 \bmod 24$.}
\end{cases}
$$ 
One may verify that that the Fourier coefficients of $f_{\psi^3\otimes \phi}\otimes\chi$ coincide with those of a newform of weight 4 and level 288.
Moreover, we have
$$
f_{\psi^3\otimes\rs{3}{\cdot}_4}\otimes \chi=f_{\psi^3\otimes\rs{-3}{\cdot}_4},
$$
thus it is a quartic twist of
\href{http://www.lmfdb.org/ModularForm/GL2/Q/holomorphic/32/4/1/b/}{32.4.1b}.\footnote{If $f\in S_k^{\mathrm{new}}(N)$ is an eigenform and $\chi:(\Z/M\Z)^*\rightarrow \C^*$ is a Dirichlet character, then $f\otimes\chi$ is a (not necessarily new) eigenform of $S_k(\lcm(N,M^2))$. The minimal level of $f_{\psi^3\otimes \phi}$ should thus be a divisor of $576$.
Data for this level is not yet available in \cite{LMFDB}, but one may use \cite{Magma} or  \cite{Sage} to identify $f_{\psi^3\otimes \phi}=f\otimes \chi$ as a newform at level 576.}

The Hecke character $\psi^2$ has infinite type $2$ and conductor $(1+i)^2$. Indeed, observe that for $\alpha\in K^ *$ we have
$$
\psi^2(\alpha\mathcal O_K)=\psi(\alpha^2\mathcal O_K)
$$
and $\alpha^2\equiv 1 \bmod (1+i)^3$ if $\alpha\equiv 1\bmod (1+i)^2$.
Thus $f_{\psi^2}$ is a newform in $S_3^{\mathrm{new}}(\Gamma_1(16))$. Let $\phi:=\rs{27}{\cdot}_4$. Proceeding as in the previous case, one may show that $f_{\psi^2\otimes \phi}=\sum_{n\geq 1}b_n q^n$ is new at level $576$ and that its first Fourier coefficients, for primes $p\equiv 1\,\bmod 4$ with $3<p\leq 97$, are
$$
\begin{array}{rrrrrrrrrrrr}\hline
p\colon & 5 & 13 & 17 & 29 & 37 & 41 & 53 & 61 & 73 & 89 & 97\\
b_p\colon & -8 & -10 & 16 & 40 & -70 & -80 & -56 & -22 & 110 & 160 & -130\\\hline
\end{array}
$$

\subsection{A weight 4 CM newform with cubic and sextic twists}\label{subsection: psi-3}
Let $K=\Q(\omega)$.
Since $K$ has class number $1$, for any prime ideal $\fp$ of $K$ there exists $\alpha_\fp\in \mathcal O_K$ such that $\fp=(\alpha_\fp)$.
For $\fp$ coprime to $2\sqrt{-3}$, by multiplying~$\alpha_\fp$ by an element of $\mathcal O_K^*=\langle \omega \rangle$, we may assume that $\alpha_\fp\equiv 1 \bmod 3$, and this uniquely determines $\alpha_\fp$ (see \cite[Prop. 9.3.5]{IR82}).
We now define
$$
\psi(\fp):=\alpha_\fp\,.
$$
This is the Hecke character of infinite type $1$ and conductor $\gM=(3)$ attached to the elliptic curve $y^2+y=x^3$. 
The newform $f_\psi\in S_2^{\mathrm{new}}(27)$ has rational coefficients and identifier \href{http://www.lmfdb.org/ModularForm/GL2/Q/holomorphic/27/2/1/a/}{27.2.1a}.

The Hecke character $\psi^3$ has infinite type $3$ and conductor $\gM=\bigl(\sqrt{-3}\bigr)$. Indeed, observe that for $\alpha\in K^ *$ we have
$$
\psi^3(\alpha\mathcal O_K)=\psi(\alpha^3\mathcal O_K)
$$
and $\alpha^ 3\equiv 1 \bmod 3$ if $\alpha\equiv 1\bmod \sqrt{-3}$. Thus $f_{\psi^3}$ is a newform in $S_4^{\mathrm{new}}(9)$, and its identifier is \href{http://www.lmfdb.org/ModularForm/GL2/Q/holomorphic/9/4/1/a/}{9.4.1a}.

Let $\phi:=\rs{2}{\cdot}_6$. The Hecke character $\psi^3\otimes \phi$ has infinite type $3$.
As before we compute $\psi$ and the first several Fourier coefficients of $f_{\psi^3\otimes \phi}=\sum_{n\geq 1}b_n q^n$; for primes $p\equiv 1\,\bmod 6$ with $3<p\leq 97$, we obtain
$$
\begin{array}{rrrrrrrrrrrr}\hline
p: & 7 & 13 & 19 & 31 & 37 & 43 & 61 & 67 & 73 & 79 & 97 \\
b_p: & 17 & -89 & -107 & 308 & 433 & 520 & 901 & -1007 & -271 & 503 & 1853 \\\hline
\end{array}
$$
Let $\chi\colon (\Z/24\Z)^*\rightarrow \C^*$ denote the quadratic Dirichlet character defined by
$$
\chi(n):=
\begin{cases}
1 & \text{if $n\equiv\, 1,\,5,\,7,\,11\bmod 24$;}\\
-1 & \text{if $n\equiv\, 13,\,17,\,19,\,23\bmod 24$.}
\end{cases}
$$ 
One may verify that the Fourier coefficients of $f_{\Psi\otimes \phi}\otimes\chi$ coincide with those of a newform of weight 4 and level 108
Moreover, we have
$$
f_{\psi^3\otimes \rs{2}{\cdot}_6}\otimes\chi=f_{\psi^3\otimes \rs{2}{\cdot}_6\otimes\rs{2}{N(\cdot)}}=f_{\psi^3\otimes \rs{4}{\cdot}_3}\,.
$$
Thus $f_{\Psi\otimes\phi}\otimes\chi$ (resp. $f_{\psi^3\otimes \phi}$) is a cubic (resp. sextic) twist of \href{http://www.lmfdb.org/ModularForm/GL2/Q/holomorphic/9/4/1/a/}{9.4.1a}.\footnote{Although we will not need it in what follows, we might ask about the minimal level of $f_{\psi^3\otimes \phi}$.
It must be a divisor of $\rm{lcm}(108,24^2)=1728$. This is again out of the range of \cite{LMFDB}, but one may use \cite{Magma} or \cite{Sage} to determine that $f_{\psi^3\otimes\phi}$ is new at level 1728.}

In \S\ref{section: prod mf} we also consider the newform $f_{\psi^2}\in S_3^{\mathrm{new}}(\Gamma_1(27))$.

\subsection{Computing Fourier coefficients of newforms}

One of the key advantages of working with CM newforms $f_{\psi^2}$ or $f_{\psi^3}$ is that we can derive their Fourier coefficients from the corresponding coefficients of the weight 2 CM newform $f_{\psi}$, which we can compute very quickly.

\begin{lemma}\label{lemma:wt4cm}
Let $\psi$ be a Hecke character of an imaginary quadratic field $K$ and suppose that $f_\psi$ has trivial nebentypus.
Suppose that we have Fourier $q$-expansions $f_\psi=\sum b_nq^n$, $f_{\psi^2}=\sum d_nq^n$, and $f_{\psi^3}=\sum e_nq^n$.  Then
\begin{equation}\label{equation: rel coefs}
d_p=b_p^2-2p\qquad \text{and}\qquad e_p = b_p^3-3pb_p
\end{equation}
for primes $p$ that split in $K$. For primes $p$ inert in $K$, we have $d_p=e_p=0$.
\end{lemma}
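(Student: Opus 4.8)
The plan is to read everything off the defining $q$-expansion. Recall that $f_\psi=\sum_{\fa}\psi(\fa)q^{N(\fa)}$, where $\fa$ runs over the nonzero integral ideals of $\mathcal{O}_K$, and likewise $f_{\psi^2}=\sum_{\fa}\psi^2(\fa)q^{N(\fa)}$ and $f_{\psi^3}=\sum_{\fa}\psi^3(\fa)q^{N(\fa)}$. Thus for a rational prime $p$ the coefficient $b_p$ (resp.\ $d_p$, $e_p$) is the sum of $\psi$ (resp.\ $\psi^2$, $\psi^3$) over the integral ideals of norm exactly $p$, i.e.\ over the degree-one primes of $K$ above $p$. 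If $p$ is inert in $K$, then $p\mathcal{O}_K$ is prime of norm $p^2$ and there is no integral ideal of norm $p$, so $b_p=d_p=e_p=0$; this disposes of the inert case immediately.

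Now suppose $p$ splits, say $p\mathcal{O}_K=\fp\overline\fp$ with $\fp\neq\overline\fp$, both of norm $p$. These are the only integral ideals of norm $p$, so by multiplicativity of $\psi$,
\[
b_p=\psi(\fp)+\psi(\overline\fp),\qquad d_p=\psi(\fp)^2+\psi(\overline\fp)^2,\qquad e_p=\psi(\fp)^3+\psi(\overline\fp)^3.
\]
Next I would pin down the product $\psi(\fp)\psi(\overline\fp)$. The hypothesis that $f_\psi$ has trivial nebentypus means precisely that $\eta(n)=\psi(n\mathcal{O}_K)/n^{l}$ is the trivial character, where $l$ is the infinite type of $\psi$; since the formulas to be proved carry factors $2p$ and $3pb_p$ we are in the case $l=1$ (equivalently $f_\psi$ has weight $2$, as in \S\ref{subsection: psi-4} and \S\ref{subsection: psi-3}), so $\psi(n\mathcal{O}_K)=n$ for every rational integer $n$ prime to the conductor. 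Applying this with $n=p$ gives
\[
\psi(\fp)\,\psi(\overline\fp)=\psi(p\mathcal{O}_K)=p.
\]

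Finally I would invoke the Newton identities for the two roots $x=\psi(\fp)$, $y=\psi(\overline\fp)$ of $T^2-b_pT+p$: from $x+y=b_p$ and $xy=p$ one gets $x^2+y^2=b_p^2-2p$ and $x^3+y^3=(x+y)^3-3xy(x+y)=b_p^3-3pb_p$, which are exactly the claimed expressions for $d_p$ and $e_p$. The argument is entirely formal, so the only point needing care — and the only place I expect any friction — is the normalization bookkeeping: checking that trivial nebentypus indeed forces $\psi(\fp)\psi(\overline\fp)=p$ rather than some other power of $p$, and noting that a split prime dividing the conductor of $\psi$ (where $\psi$ is extended by zero) does not interfere, since for the characters used in \S\ref{subsection: psi-4} and \S\ref{subsection: psi-3} the conductor is supported at ramified primes; any such finitely many exceptional split primes could in any case be handled by direct inspection.
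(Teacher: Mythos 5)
Your proof is correct and follows essentially the same route as the paper: express $b_p,d_p,e_p$ as power sums of $\psi(\fp),\psi(\overline\fp)$ in the split case, use $\psi(\fp)\psi(\overline\fp)=p$, and apply the Newton identities, with the inert case vanishing because there are no ideals of norm $p$ (equivalently, because $f_{\psi^2}$ and $f_{\psi^3}$ have CM by $K$). If anything you are slightly more careful than the paper, which uses $\psi(\fp)\psi(\overline\fp)=p$ without comment, whereas you derive it from the trivial-nebentypus hypothesis and note that this pins the infinite type to $1$.
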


\begin{proof}
If $p=\fp\overline\fp$ splits in $K$, then $b_p=\psi(\fp)+\psi(\overline\fp)$, and the $p$th Fourier coefficients of $f_{\psi^2}$ and $f_{\psi^3}$ are given by
\begin{align*}
d_p &=\psi(\fp)^2+\psi(\overline\fp)^2= (\psi(\fp)+\psi(\overline\fp))^2 - 2\psi(\fp)\psi(\overline\fp)=b_p^2-2p\,,\\
e_p &=\psi(\fp)^3+\psi(\overline\fp)^3 = (\psi(\fp)+\psi(\overline\fp))^3 - 3\psi(\fp)\psi(\overline\fp)(\psi(\fp)+\psi(\fp)) = b_p^3-3pb_p\,.
\end{align*}
If $p$ is inert in $K$, then $d_p=e_p=0$, because $f_{\psi^ 2}$ and $f_{\psi^3}$ have CM by $K$.
\end{proof}

We note that, in particular, the Fourier coefficients $b_p$ of  \href{http://www.lmfdb.org/ModularForm/GL2/Q/holomorphic/27/2/1/a/}{27.2.1a} (resp.  \href{http://www.lmfdb.org/ModularForm/GL2/Q/holomorphic/32/2/1/a/}{32.2.1a}) and the Fourier coefficients $d_p$ of \href{http://www.lmfdb.org/ModularForm/GL2/Q/holomorphic/9/4/1/a/}{9.4.1a} (resp. \href{http://www.lmfdb.org/ModularForm/GL2/Q/holomorphic/32/4/1/b/}{32.4.1b}) satisfy (\ref{equation: rel coefs}).



Efficiently computing the Fourier coefficients of a general\footnote{Mark Watkins points out that a few examples can be generated using $\eta$ products, whose Fourier coefficients can be computed efficiently using the power series expansion of $\eta$. For example, the form 5.4a used in
Example~\ref{example:mfsum_G33} can be realized as
$\eta(q)^4 \eta(q^5)^4$.}  weight~4 newform is more difficult.
Here we use the modular symbols approach implemented in \cite{Magma} and \cite{Sage}, with a running time of $\tilde{O}(N^2)$.
To improve the constant factors in the running time, we use some specialized C code to handle the most computationally intensive steps, a strategy suggested to us by William Stein.
This optimization speeds up the computation by more than a factor of 100, making it easy to handle norm bounds as large as  $B=2^{24}$.

\section{Direct sum constructions}\label{section: sum}

Following a suggestion of Serre, in this section we construct $M=M_1\oplus M_2$ as the direct sum of 
the Tate twist $M_1$ of the motive associated to a weight~$2$ newform~$f_1$ (with Hodge numbers $h^{2,1} = h^{1,2} = 1$) and the motive $M_2$ associated to a
weight~$4$ newform $f_2$ (with Hodge numbers $h^{3,0} = h^{0,3} = 1$).
We require both~$f_1$ and $f_2$ to have rational Fourier coefficients.

The motive $M$ is defined over $\Q$, but we may also consider its base change to a number field $K$.  Let $f_1=\sum b_nq^n$ and $f_2=\sum d_nq^n$ be the $q$-expansions of~$f_1$ and~$f_2$.  Since~$f_1$ and $f_2$ both have trivial nebentypus (by Proposition~\ref{proposition: nebentypus}), the coefficients of the $L$-polynomial $L_{\fp}(T)$ of the motive $M$ at a prime~$\fp$ of $K$ are given by
\begin{equation}\label{eq:a1a2sum}
c_1 = -(pb_p+d_p)\qquad\text{and}\qquad c_2 = b_pd_p+2p^2,
\end{equation}
where $p=N(\fp)$ and the integer coefficients $c_1$ and $c_2$ are as defined in \eqref{eq:Lpoly}.  The normalized  coefficients are then $a_1(\fp)=c_1/p^{3/2}$ and $a_2(\fp)=c_2/p^2$.

\subsection{Direct sums of uncorrelated newforms}
\label{subsection: Uncorrelated}

We first consider the case where $f_1$ and $f_2$ have no special relationship; the case where $f_1$ and $f_2$ are related (for example, by having the same CM field) is addressed in the next section.
When $f_1$ and $f_2$ are unrelated, we expect the identity component of the Sato-Tate group of $M$ to be one of the three product groups $\Unitary(1)\times\Unitary(1)$, $\Unitary(1)\times \SU(2)$, or $\SU(2)\times \SU(2)$,  depending on whether both, one, or neither of the newforms has complex multiplication.

In the case where exactly one of the forms has complex multiplication, we expect to see the same distribution regardless of which form has CM, and this is confirmed by our numerical experiments.
Thus to facilitate the computations, in both of the first two cases we take $f_2$ to be a CM newform to which Lemma~\ref{lemma:wt4cm} applies, allowing us to use norm bounds $B=2^n$ ranging from $2^{12}$ to $2^{40}$.
In the third case we cannot choose $f_2$ to have CM, which makes the computations more difficult; here we only let $B$ range from $2^{12}$ to~$2^{24}$.
Fortunately there are only two possible Sato-Tate groups in this case and their moments are easily distinguished.

\begin{example}[$\boldsymbol{F, F_a, F_{a,b}}$]\label{example:mfsum_F}
Let $f_1$ be the weight 2 newform \href{http://www.lmfdb.org/ModularForm/GL2/Q/holomorphic/32/2/1/a/}{32.2.1a}, corresponding to (the isogeny class of) the elliptic curve $y^2=x^3-x$, which has CM by $\Q(i)$, and let $f_2$ by the weight 4 newform \href{http://www.lmfdb.org/ModularForm/GL2/Q/holomorphic/9/4/1/a/}{9.4.1a}, which has CM by $\Q(\omega)$.
Moment statistics for the motive $M=M_1\oplus M_2$ over the fields $K=\Q(i,\omega), \Q(\omega),\Q$ are listed in Table~\ref{table:mfsum_F}, along with the corresponding moments for the groups $G=F,F_a,F_{a,b}$.
With $K=\Q(i)$ one obtains essentially the same moment statistics as with $K=\Q(\omega)$; this is as expected, since the groups $F_a$ and $F_b$ are conjugate.
\end{example}

\begin{example}[$\boldsymbol{G_{1,3},N(G_{1,3})}$]\label{example:mfsum_G13}
Let $f_1$ be the weight 2 newform \href{http://www.lmfdb.org/ModularForm/GL2/Q/holomorphic/11/2/1/a/}{11.2.1a}, corresponding to the elliptic curve $y^2+y=x^3-x^2-10x-20$, which does not have CM, and let  $f_2$ by the weight 4 newform \href{http://www.lmfdb.org/ModularForm/GL2/Q/holomorphic/9/4/1/a/}{9.4.1a}, which has CM by $\Q(\omega)$.
Moment statistics for the motive $M=M_1\oplus M_2$ over $K=\Q(\omega),\Q$ are listed in Table~\ref{table:mfsum_F}, along with the corresponding moments for $G=G_{1,3},N(G_{1,3})$.
\end{example}

\begin{example}[$\boldsymbol{G_{3,3}}$]\label{example:mfsum_G33}
Let $f_1$ be the weight 2 newform \href{http://www.lmfdb.org/ModularForm/GL2/Q/holomorphic/11/2/1/a/}{11.2.1a}, and let $f_2$ be the weight 4 newform \href{http://www.lmfdb.org/ModularForm/GL2/Q/holomorphic/5/4/1/a/}{5.4.1a}, neither of which has complex multiplication.
Moment statistics for the motive $M= M_1\oplus M_2$ over $K=\Q$  are listed in Table~\ref{table:mfsum_G33}, along with the corresponding moments for $G=G_{3,3}$.
\end{example}

\subsection{Direct sums of correlated newforms}
\label{subsection: Correlated}

We now consider motives $M=M_1\oplus M_2$ where $M_1$ and $M_2$ are associated to newforms $f_1$ and $f_2$ that bear a special relationship.  More specifically, we shall take $f_1$ to be a weight 2 newform $f_\psi$ with CM by $K$, where $\psi$ is a Hecke character of $K$ (of infinite type~$1$), and then take $f_2$ to be a weight 4 newform $f_{\psi^3\otimes \phi}$, where~$\phi$ is a finite order Hecke character (of infinite type~$0$).
Using variations of the two constructions given in \S\ref{subsection: psi-4} and \S\ref{subsection: psi-3} we are able to construct motives whose $L$-polynomial distributions match all ten of the candidate Sato-Tate groups $G=C_n,J(C_n)$ with identity component $\Unitary(1)$, where $n=1,2,3,4,6$. Moreover, via arguments analogous to those used in \cite{FS12}, we are able to prove equidistribution in each of these cases (alternatively, as we are concerned with a CM construction, equidistribution could be directly deduced from the work of Johansson \cite{Joh14}).

\begin{lemma}\label{lemma:corrmf}
Let $\psi$ be a Hecke character of $K$ of infinite type $1$ such that $f_\psi$ has rational coefficients.  Let $M_1$ be the Tate twist of the motive associated to the weight $2$ newform $f_\psi$. Let $M_2$ be the motive associated to the weight $4$ newform $f_{\psi^ 3\otimes\phi}$, where $\phi$ is a finite order Hecke character (of infinite type~$0$) such that $f_{\psi^ 3\otimes\phi}$ has rational coefficients. The distribution of the normalized Frobenius eigenvalues of $M_1\oplus M_2$ (resp. the extension of scalars $(M_1\oplus M_2)_K$) coincides with the distribution of the eigenvalues of a random element in the group $J(C_{\ord(\phi)})$ (resp. $C_{\ord(\phi)}$).
\end{lemma}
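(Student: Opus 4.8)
The plan is to compute the distribution of the normalized Frobenius eigenvalues of $M_1 \oplus M_2$ directly in terms of the Hecke character $\psi$ and its twist, and to recognize the result as the pushforward of Haar measure on $J(C_{\ord(\phi)})$. First I would note that the primes $\fp$ of $K$ (or of a number field over which we take the base change) that are inert in the imaginary quadratic field $K_0$ underlying $\psi$ form a set of density $1/2$, and on these primes Lemma~\ref{lemma:wt4cm} gives $d_p = 0$ and $b_p = 0$, so $(c_1, c_2) = (0, 2p^2)$, i.e.\ $(a_1, a_2) = (0, 2)$; these correspond exactly to the components $\zeta_m^k J$ of $J(C_n)$ on which $a_1 = 0$, $a_2 = 2$, as recorded in \S\ref{section: moments}. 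So the inert primes already account for the ``$J$-part'' of the group.

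Next I would treat the split primes $p = \fp \overline{\fp}$. Writing $\psi(\fp) = \alpha_\fp$ (a generator normalized as in \S\ref{subsection: psi-4} or \S\ref{subsection: psi-3}) and $\phi(\fp) = \zeta$ a root of unity of order dividing $\ord(\phi)$, we have $b_p = \alpha_\fp + \overline{\alpha_\fp}$ and, by Lemma~\ref{lemma:wt4cm} applied to $\psi^3 \otimes \phi$ (using that $\phi$ has infinite type $0$, so $(\psi^3\otimes\phi)(\fp) = \alpha_\fp^3 \zeta$), one gets $d_p = \alpha_\fp^3 \zeta + \overline{\alpha_\fp}^3\overline{\zeta}$. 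Substituting into \eqref{eq:a1a2sum} and normalizing by writing $\alpha_\fp = \sqrt{p}\, e^{i\theta_\fp}$, the pair $(a_1(\fp), a_2(\fp))$ becomes an explicit trigonometric function of the angle $\theta_\fp$ and the root of unity $\zeta$. Comparing with \eqref{equation: HodgeU1} and the description of $C_n$ in \S\ref{section: unitary}, one sees that this is exactly the pair of characteristic-polynomial coefficients $(a_1, a_2)$ of the element $\theta(e^{i\theta_\fp})\zeta_n^k$ of the component $C(\zeta_n^k)$ of $C_n$, where $n = \ord(\phi)$ and $k$ is determined by $\zeta = e^{2\pi i k/n}$ (up to the identification of the two square roots of $v_1, v_2$ in the proof of the second lemma of \S\ref{section: unitary}, which only affects the choice of conjugating matrix). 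Thus the images of split primes in $\Conj(\USp(4))$ are those coming from the identity-component part $C_n \subset J(C_n)$.

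The equidistribution statement then reduces to two inputs. First, the angles $\theta_\fp$ of the normalized generators $\alpha_\fp/\sqrt p$ are equidistributed on $\Unitary(1)$ with respect to Haar measure as $\fp$ ranges over split primes — this is the classical equidistribution of Hecke characters of infinite type $\ge 1$ (Hecke, or Serre's argument via $L$-functions of $\psi^j$, $j \ge 1$), exactly as used in \cite{FS12}. Second, the residue-symbol values $\phi(\fp) = \bigl(\tfrac{\cdot}{\fp}\bigr)$ are equidistributed among the $\ord(\phi)$-th roots of unity, and moreover jointly equidistributed with the angles $\theta_\fp$; this follows because $\theta\mapsto$(angle) and $\fp\mapsto\phi(\fp)$ are governed by the characters $\psi^j \phi^k$ with $(j,k)\ne(0,0)$, each of which is again a nontrivial Hecke character of infinite type $j$ and so has holomorphic nonvanishing $L$-function on $\Re s = 1$ (infinite type $\ge 1$) or is a nontrivial finite-order character (when $j = 0$, $k\ne 0$) with the same property. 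Feeding this joint equidistribution through the continuous map computed above, and combining with the density-$1/2$ split/inert dichotomy, yields precisely Haar measure on $J(C_n)$ pushed to $\Conj(\USp(4))$ for $M_1\oplus M_2$ over $\Q$; over $K$ the inert primes split, so only the $C_n$-part survives, giving $C_{\ord(\phi)}$.

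The main obstacle I expect is the joint equidistribution of the angle $\theta_\fp$ and the twist value $\phi(\fp)$ — i.e.\ verifying that these two ``coordinates'' are independent. Each piece separately is standard, but one must check that every nontrivial monomial $\psi^j\phi^k$ is a Hecke character to which a Weyl-equidistribution/$L$-function argument applies; the potential subtlety is that $\psi^j\phi^k$ could a priori be trivial or of finite order for some $j\ne 0$, which would break independence. Ruling this out uses that $\psi$ has infinite type exactly $1$, so $\psi^j$ has infinite type $j\ne 0$ for $j\ne 0$ and hence cannot be cancelled by the finite-order $\phi^k$; this is the one place where the hypothesis ``infinite type $1$'' is essential, and it is the crux of the argument, everything else being the bookkeeping of matching the explicit trigonometric formulas to the components of $C_n$ and $J(C_n)$.
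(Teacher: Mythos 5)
Your argument is correct and has the same overall structure as the paper's proof: the inert primes yield the conjugacy class with eigenvalues $i,-i,i,-i$, accounting for the components $\zeta_n^kJ$ of $J(C_n)$; the split primes yield normalized eigenvalues $x_\fp,\ \overline x_\fp,\ x_\fp^3\phi(\fp),\ \overline x_\fp^3\overline{\phi(\fp)}$, matching the components $C(\zeta_n^k)$; and everything reduces to the joint equidistribution of the angle $x_\fp$ and the root of unity $\phi(\fp)$. The one place where you genuinely diverge is in how that joint equidistribution is established: you run the Weyl criterion directly on the product characters $\psi^j\phi^k$, checking that each nontrivial one is a nontrivial Hecke character (of infinite type $j\neq 0$, or of finite order when $j=0$), whereas the paper converts $\phi$ into a character $\tilde\phi$ of $\Gal(K_\gM/K)$ via the Artin map and quotes Proposition 3.6 of \cite{FS12}, which gives equidistribution of the $x_\fp$ conditioned on $\Frob_\fp$ lying in a fixed class of the ray class group. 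The two mechanisms carry the same analytic content, and your identification of the crux --- that $\psi^j\phi^k$ cannot degenerate to a finite-order character when $j\neq 0$ because $\psi$ has infinite type exactly $1$ --- is precisely what makes either version work; your route is self-contained at the cost of redoing the Hecke $L$-function estimates, while the paper's route outsources them to the cited proposition.
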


\begin{proof}
Since $f_\psi$ has rational coefficients, its nebentypus is trivial. Thus, if~$p$ is inert in $K$, then the normalized Frobenius eigenvalues of $M_1\oplus M_2$ are $i,-i,i,-i$. It is straightforward to check that, for any $n\in\N$, these are precisely the eigenvalues of the matrix
$$
\begin{pmatrix}
\Theta_n & 0 \\ 0 & \overline \Theta_n
\end{pmatrix} J\,,
$$
where $\Theta_n$ and $J$ are as in \S\ref{section: unitary}. 
If $p=\fp\overline \fp$ splits in $K$, then the Frobenius eigenvalues of $M_1\oplus M_2$ are
$$
N(\fp)\psi(\fp),\, N(\fp)\overline{\psi(\fp)},\,\psi(\fp)^ 3\phi(\fp),\,\overline{\psi(\fp)}^3\overline{\phi(\fp)}\,.
$$
Setting $x_\fp:=\frac{\psi(\fp)}{N(\fp)^{1/2}}$, we find that the normalized Frobenius eigenvalues are
$$
x_\fp,\, \overline x_\fp ,\,\left(x_\fp\right)^ 3\phi(\fp),\,\left(\overline x_\fp\right)^3\overline{\phi(\fp)}\,.
$$

Now let $\gM$ be the conductor of $\phi$, let $K_{\gM}$ be the ray class field of $K$ of modulus $\gM$, and let $(\cdot,K_\gM/K)\colon I_\gM\rightarrow \Gal(K_\gM/K)$ denote the Artin map. Since $(\cdot,K_\gM/K)$ is surjective, for any $\mathfrak a \in I_\gM$ the equality
$$
\tilde\phi((\mathfrak{a},K_\gM/K))=\phi(\mathfrak a)
$$
uniquely determines a character $\tilde\phi\colon \Gal(K_\gM/K)\rightarrow \C^*$. Since the kernel of $\phi$ coincides with the kernel of $(\cdot,K_\gM/K)$ (consisting of those $\alpha\mathcal O_K$ with $\alpha\in K^*$ for which $\alpha\equiv 1 \,(\bmod\, \gM)$), the map $\tilde\phi$ is well defined. We thus have a commutative diagram
$$
\xymatrix{
 I_\gM\ar@{->>}[rd]_{(\cdot,K_\gM/K)} \ar[r]^{\phi} & \C^*\\
 &\Gal(K_\gM/K)\ar[u]^{\tilde \phi}\,,}
$$
with $\tilde\phi$ satisfying $\tilde\phi(\Frob_\fp)=\phi(\fp)$ for every prime $\fp$ coprime to $\gM$.
The lemma then follows from Proposition 3.6 of \cite{FS12}, which asserts that the $x_\fp$'s are equidistributed on $\Unitary(1)$, even when $\fp$ is subject to the condition that $\Frob_\fp=c$ for some fixed conjugacy class $c$ of $\Gal(K_\gM/K)$.
\end{proof}

\begin{example}[$\boldsymbol{C_1,J(C_1)}$]\label{example:mfsum_C1}
Let $f_1=f_\psi$ be the weight 2 newform \href{http://www.lmfdb.org/ModularForm/GL2/Q/holomorphic/27/2/1/a/}{27.2.1a}, corresponding to the elliptic curve $y^2+y=x^3$, and let $f_2=f_{\psi^3}$ be the weight~4 newform 
\href{http://www.lmfdb.org/ModularForm/GL2/Q/holomorphic/9/4/1/a/}{9.4.1a}; both $f_1$ and $f_2$ have CM by $\Q(\omega)$.
Moment statistics for the motive $M = M_1\oplus M_2$ over $K=\Q(\omega),\Q$  are listed in Table~\ref{table:mfsum_C1}, along with the corresponding moments for $G=C_1,J(C_1)$.
\end{example}

\begin{example}[$\boldsymbol{C_2,J(C_2)}$]\label{example:mfsum_C2}
Let $f_1=f_\psi$ be the weight 2 newform \href{http://www.lmfdb.org/ModularForm/GL2/Q/holomorphic/27/2/1/a/}{27.2.1a}, and let $f_2=f_{\psi^3}\otimes \chi_4$ be the weight 4 newform that is the quadratic twist of \href{http://www.lmfdb.org/ModularForm/GL2/Q/holomorphic/9/4/1/a/}{9.4.1a} by the nontrivial Dirichlet character $\chi_4$ of modulus 4; both $f_1$ and $f_2$ have CM by $\Q(\omega)$.
Moment statistics for the motive $M = M_1\oplus M_2$ over $K=\Q(\omega),\Q$  are listed in Table~\ref{table:mfsum_C2}, along with the corresponding moments for $G=C_2,J(C_2)$.
\end{example}

\begin{example}[$\boldsymbol{C_3,J(C_3)}$]\label{example:mfsum_C3}
Let $f_1=f_\psi$ be the weight 2 newform \href{http://www.lmfdb.org/ModularForm/GL2/Q/holomorphic/27/2/1/a/}{27.2.1a}, and let $f_2=f_{\psi^3\otimes\rs{2}{\cdot}_6}\otimes\chi$ be the weight 4 newform that is a cubic twist of \href{http://www.lmfdb.org/ModularForm/GL2/Q/holomorphic/9/4/1/a/}{9.4.1a}, as shown in \S\ref{subsection: psi-3} where $\chi$ is defined; both $f_1$ and $f_2$ have CM by $\Q(\omega)$.
Moment statistics for the motive $M = M_1\oplus M_2$ over $K=\Q(\omega),\Q$  are listed in Table~\ref{table:mfsum_C3}, along with the corresponding moments for $G=C_3,J(C_3)$.
\end{example}

\begin{example}[$\boldsymbol{C_4,J(C_4)}$]\label{example:mfsum_C4}
In this case we may apply a quartic twist to either $f_\psi$ or $f_{\psi^3}$, and it is computationally more convenient to do the former.
So let $f_1$ be the weight 2 newform
corresponding to the elliptic curve $y^2=x^3-2x$, which is a quartic twist of the form $f_\psi=$ \href{http://www.lmfdb.org/ModularForm/GL2/Q/holomorphic/32/2/1/a/}{32.2.1a}.
Let $f_2=f_{\psi^3}$ be the weight~4 newform \href{http://www.lmfdb.org/ModularForm/GL2/Q/holomorphic/32/4/1/b/}{32.4.1b};
both $f_1$ and $f_2$ have CM by $\Q(i)$.
Moment statistics for the motive $M = M_1\oplus M_2$ over $K=\Q(i),\Q$  are listed in Table~\ref{table:mfsum_C4}, along with the corresponding moments for $G=C_4,J(C_4)$.
\end{example}

\begin{example}[$\boldsymbol{C_6,J(C_6)}$]\label{example:mfsum_C6}
Let $f_1=f_\psi$ be the weight 2 newform \href{http://www.lmfdb.org/ModularForm/GL2/Q/holomorphic/27/2/1/a/}{27.2.1a}, and let $f_2=f_{\psi^3\otimes\rs{2}{\cdot}_6}$ be the weight 4 newform of level 576 constructed in \S\ref{subsection: psi-3},
which is a sextic twist of \href{http://www.lmfdb.org/ModularForm/GL2/Q/holomorphic/9/4/1/a/}{9.4.1a}; both $f_1$ and $f_2$ have CM by $\Q(\omega)$.
Moment statistics for the motive $M = M_1\oplus M_2$ over $K=\Q(\omega),\Q$  are listed in Table~\ref{table:mfsum_C6}, along with the corresponding moments for $G=C_6,J(C_6)$.
\end{example}

\section{Tensor product constructions}\label{section: product}

We now consider motives of the form $M=M_1 \otimes M_2$, in which $M_1$ is a 1-motive with Hodge numbers $h^{1,0}=h^{0,1}=1$, and $M_2$ is a 2-motive with Hodge numbers $h^{2,0}=h^{0,2}=1$. We also
consider the related construction in which $M$ is the symmetric cube of $M_1$.

\subsection{Tensor product constructions using elliptic curves}

We first consider examples in which $M_1$ is the 1-motive of an elliptic curve $E_1$ and $M_2$ is the complement of the Tate motive in the symmetric square of an elliptic curve $E_2$ with complex multiplication defined over $K$. 
When $E_1$ does not have complex multiplication, the Sato-Tate group should be $\Unitary(2)$. If $E_1$ has complex multiplication and is not $\overline{K}$-isogenous to $E_2$, the Sato-Tate group should be $F_\nothing$ or $F_c$ depending on whether its complex multiplication is defined over $K$ or not.\footnote{To see why it must be $F_c$, as opposed to $F_a$ or $F_{ab}$, which also have component groups of order 2, note that \eqref{eq:a1a2prod} implies that $G$ must have invariants $z_1=1$ and $z_2=[0,0,0,0,0]$.}
In the case that $E_1$ and $E_2$ are $\overline{K}$-isogenous, the Sato-Tate group should have identity component $\Unitary(1)$; this case is discussed in further detail below.

For any prime $\fp$ of $K$ where both $E_1$ and $E_2$ have good reduction, the coefficients of the $L$-polynomial $L_\fp(T)$ of the motive $M_1\otimes M_2$ can be derived directly from the Frobenius traces $t_1$ and $t_2$ of $E_1$ and $E_2$ at $\fp$.
If $\alpha_1,\overline\alpha_1$ and $\alpha_2,\overline\alpha_2$ are the Frobenius eigenvalues of the reductions of $E_1$ and $E_2$ modulo $\fp$ respectively,  then the Frobenius eigenvalues of $M_1\otimes M_2$ at $\fp$ are $\alpha_1\alpha_2^2,\alpha_1\overline\alpha_2^2,\overline\alpha_1\alpha_2^2$, and $\overline\alpha_1\overline\alpha_2^2$.
The $L$-polynomial coefficients $c_1$ and $c_2$ of \eqref{eq:Lpoly} may be computed via
\begin{equation}\label{eq:a1a2prod}
c_1 = -t_1(t_2^2-2p)\qquad\text{and}\qquad  c_2 = pt_1^2+(t_2^2-2p)^2-2p^2,
\end{equation}
where $p=N(\fp)$, and the normalized coefficients are then $a_1(\fp) = c_1/p^{3/2}$ and $a_2(\fp)=c_2/p^2$.

By using the \texttt{smalljac} software \cite{KS08} to compute the sequences of Frobenius traces of $E_1$ and $E_2$ and applying \eqref{eq:a1a2prod} to the results, we can very efficiently compute the moment statistics of $a_1$ and $a_2$, which allows us to use norm bounds $B=2^n$ ranging from $2^{12}$ to $2^{40}$.

\begin{example}[$\boldsymbol{\Unitary(2)}$]\label{example:ecprod_U2}
Let $E_1$ be the elliptic curve $y^2=x^3+x+1$, which does not have CM, and let $E_2$ be the elliptic curve $y^2=x^3+1$, which has CM by $\Q(\omega)$.
Moment statistics for the motive $M=M_1\otimes M_2$ over $K=\Q(\omega)$ are listed in Table~\ref{table:ecprod_U2}, along with the corresponding moments for the group $G=\Unitary(2)$.
(One can also achieve $N(\Unitary(2))$ by considering this example
over $\Q$; compare Example~\ref{example:mfprod_U2}.)
\end{example}

\begin{example}[$\boldsymbol{F, F_c}$]\label{example:ecprod_F}
Let $E_1$ be the elliptic curve $y^2=x^3+x$ with CM by $\Q(i)$, and let $E_2$ be the elliptic curve $y^2=x^3+1$ with CM by $\Q(\omega)$.
Moment statistics for the motive $M=M_1\otimes M_2$ over $K=\Q(i,\omega),\Q(\omega)$ are listed in Table~\ref{table:ecprod_F}, along with the corresponding moments for $G=F,F_c$.
(One can also achieve $F_{ab}$ and $F_{ab,c}$ by considering this example over $\Q$ and $\Q(\sqrt{3})$; compare Example~\ref{example:mfprod_F}.)
\end{example}

\begin{remark}
Here we appear to be able to realize the Sato-Tate group $F_c$, the first of the three groups ruled out in \cite{FKRS12} for weight~$1$ motives arising from abelian surfaces, and we also appear to realize the second such group, $F_{ab,c}$; see Example \ref{example:mfprod_F}.
It is unclear whether the remaining group $F_{a,b,c}$ ruled out in \cite{FKRS12} can be realized by a weight~$3$ motive with rational coefficients
(but see Example~\ref{example:hecke_char}).
\end{remark}

We now consider the case where $E_1$ and $E_2$ are $\overline{K}$-isogenous.
Without loss of generality (for the purpose of realizing groups) we may suppose that $E_1$ and $E_2$ are actually $\overline{K}$-isomorphic, that is, twists.
The case where $E_1$ and $E_2$ are $K$-isomorphic corresponds to taking the symmetric cube of an elliptic curve, which we consider in the next section;
here we assume that $E_1$ and $E_2$ are twists that are not isomorphic over $K$.

 If $E_1$ and $E_2$ are quadratic twists, the Sato-Tate group of $M_1\otimes M_2$ will be the same as that of $\Sym^3 M_1$; this is evident from \eqref{eq:a1a2prod}, since multiplying either $t_1$ or $t_2$ by $\chi(p)\in\{\pm 1\}$ for some quadratic character $\chi$ will not change any of the $a_1$ and $a_2$ moments, and these moments determine the Sato-Tate group (as can be seen in Tables~\ref{table:a1moments} and~\ref{table:a2moments}).
However, the situation changes if we take a cubic twist.

\begin{example}[$\boldsymbol{C_3}$]\label{example:ecprod_C3}
Consider the elliptic curves $E_1\colon y^2=x^3+4$ and $E_2\colon y^2=x^3+1$, both of which have CM by $K=\Q(\omega)$.
Moment statistics for $M=M_1\otimes M_2$ over $K=\Q(\omega)$ are listed  in Table~\ref{table:ecprod_C3} along with the corresponding moments for $G=C_3$.  Note that the moment $M_{12}[a_1]=854216$ distinguishes $C_3$, and the moment statistics for $M_{12}[a_1]$ obtained by this example are much closer to this value than any of the other $M_{12}[a_1]$ values in Table~\ref{table:a1moments}.
(One can also achieve $J(C_3)$ by considering this example over $\Q$;
compare Example~\ref{example:mfprod_C3}.)
\end{example}

We also get $C_3$ if we use a sextic twist,  for the same reason that using a quadratic twist yields $C_1$.
One might hope that taking $E_1$ to be a quartic twist of $E_2\colon y^2 = x^3 - x$ would yield $C_2$,
but we actually get $C_1$ instead. All of this behavior is explained  by the following lemma
and remark.

\begin{lemma}\label{lemma:CMproddist}
For $A,B\in K^*$, where $K=\Q(\omega)$, let $M_1$ be the $1$-motive of the elliptic curve $E_A\colon y^2=x^3+A$ over $K$, and let $M_2$ be the complement of the Tate motive in the symmetric square of the elliptic curve $E_B\colon y^2=x^3+B$ over~$K$. Let $L=K((A/B)^{1/6})$,  let $d=[L:K]$, and let $n=d/(d,4)$. Then the distribution of the normalized Frobenius eigenvalues of $M_1\otimes M_2$ coincides with the distribution of the eigenvalues of a random element of the group $C_n$.
\end{lemma}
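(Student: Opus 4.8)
The plan is to reduce the statement to the equidistribution input used in the proof of Lemma~\ref{lemma:corrmf} (Proposition~3.6 of \cite{FS12}), followed by an explicit change of variables that produces the divisibility by $(d,4)$. As elsewhere in this section we work with the primes $\fp$ of $K$ of degree $1$; since $K=\Q(\omega)$, such a prime has $N(\fp)=p$ for a rational prime $p$ that splits (or, for $p=3$, ramifies) in $K$, and after discarding the finitely many $\fp$ dividing $6AB$ (a set of density zero) we may assume that $E_A$ and $E_B$ both have good ordinary reduction at $\fp=\fp\overline\fp$.

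First I would record the CM data. Both $E_A$ and $E_B$ have CM by $\mathcal{O}_K=\Z[\omega]$ rationally over $K$, and $E_A$ is the sextic twist of $E_B$ by $c:=A/B\in K^*/(K^*)^6$, whose twist field is exactly $L=K(c^{1/6})$, with $\Gal(L/K)$ cyclic of order $d=[L:K]$ (the order of $c$ in $K^*/(K^*)^6$). Let $\psi$ be the Hecke character of $K$ of infinite type $1$ attached to $E_B$, so the Frobenius eigenvalues of $E_B$ at $\fp$ are $\psi(\fp)$ and $\overline{\psi(\fp)}=N(\fp)/\psi(\fp)$. Twisting $E_B$ by $c$ twists its associated Galois character by the Kummer character $\sigma\mapsto\sigma(c^{1/6})/c^{1/6}$, which factors through $\Gal(L/K)$ and has order $d$; correspondingly, the Hecke character attached to $E_A$ equals $\psi\chi$ for a finite-order Hecke character $\chi$ of $K$ that corresponds under class field theory to $L/K$, so $\chi$ has image the group $\mu_d$ of $d$-th roots of unity and $\chi(\fp)=\widetilde\chi(\Frob_\fp)$ for a faithful character $\widetilde\chi$ of $\Gal(L/K)$. (Replacing $\chi$ by $\chi^{-1}$ changes nothing below.)

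Next I would compute the normalized Frobenius eigenvalues. Put $x_\fp:=\psi(\fp)/N(\fp)^{1/2}\in\Unitary(1)$ and $\zeta_\fp:=\chi(\fp)\in\mu_d$. The normalized Frobenius eigenvalues of $M_1$ (the $1$-motive of $E_A$) are $x_\fp\zeta_\fp$ and $\overline{x}_\fp\overline{\zeta}_\fp$, while those of $M_2$ (the complement of the Tate motive in the symmetric square of $E_B$, equal to $\psi^2\oplus(\psi^c)^2$ since $E_B$ has CM) are $x_\fp^2$ and $\overline{x}_\fp^2$; tensoring, the normalized Frobenius eigenvalues of $M_1\otimes M_2$ form the multiset
\[
\Lambda(x_\fp,\zeta_\fp):=\bigl\{\, x_\fp^3\zeta_\fp,\ \overline{x}_\fp\zeta_\fp,\ x_\fp\overline{\zeta}_\fp,\ \overline{x}_\fp^3\overline{\zeta}_\fp \,\bigr\}.
\]
By Proposition~3.6 of \cite{FS12}, for each element $\sigma$ of $\Gal(L/K)$ the $x_\fp$ with $\Frob_\fp=\sigma$ are equidistributed on $\Unitary(1)$; combined with the Chebotarev density theorem for $L/K$, this shows that the pair $(x_\fp,\zeta_\fp)$ is equidistributed on $\Unitary(1)\times\mu_d$ for the product of the uniform measures. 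Finally, for each fixed $\zeta\in\mu_d$ the substitution $u=x\overline{\zeta}$ preserves the uniform measure on $\Unitary(1)$, and
\[
\Lambda(u\zeta,\zeta)=\bigl\{\, u^3\zeta^{4},\ \overline{u},\ u,\ \overline{u}^{\,3}\overline{\zeta}^{\,4} \,\bigr\}\,;
\]
hence the distribution of $\Lambda(x_\fp,\zeta_\fp)$ equals that of this multiset when $u$ is uniform on $\Unitary(1)$ and, independently, $\zeta$ is uniform on $\mu_d$. As $\zeta$ ranges over $\mu_d$ the power $\zeta^{4}$ ranges uniformly over $\mu_n$ with $n=d/(d,4)$, so this multiset has the same distribution as $\{u^3\eta,\ u,\ \overline{u}^{\,3}\overline{\eta},\ \overline{u}\}$ with $u$ uniform on $\Unitary(1)$ and $\eta$ uniform on $\mu_n$. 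The latter is precisely the multiset of eigenvalues of $\theta(u)\zeta_n^{j}$, with $\theta$ and $\zeta_n$ as in \S\ref{section: unitary} and $\eta=e^{2\pi i j/n}$; and $\theta(u)\zeta_n^{j}$ has the Haar distribution on $C_n$ when $(u,j)$ is uniform on $\Unitary(1)\times(\Z/n\Z)$. The two distributions therefore coincide, as claimed.

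The step that requires the most care is the CM bookkeeping of the second paragraph: one must verify that the twisting character relating the Hecke characters of $E_A$ and $E_B$ is well defined and has order exactly $d=[L:K]$, corresponding to $L/K$. The change of variables in the last step is short but is the conceptual heart of the argument, and it is what explains the phenomena noted just before the statement (a quadratic or quartic twist yielding $C_1$, a cubic or sextic twist yielding $C_3$): after absorbing $\zeta_\fp$ into $x_\fp$, the twisting root of unity survives only on the two eigenvalues whose $x$-exponent and $\zeta$-exponent have the same sign, where it appears to the power $3+1=4$; since $M_2$ carries no twist, there is no further cancellation.
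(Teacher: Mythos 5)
Your proof is correct and follows essentially the same route as the paper's: the twisting character (which the paper encodes via the $\Gal(L/K)$-module $\End_\Qbar(E_A,E_B)\otimes\Q$ and the decomposition of $V_\ell(E_A)$) multiplies the normalized eigenvalues of $E_B$ by a root of unity $\zeta_\fp$, Proposition~3.6 of \cite{FS12} together with Chebotarev gives equidistribution of the pair $(x_\fp,\zeta_\fp)$, and the substitution $u=x_\fp\overline{\zeta}_\fp$ leaves $\zeta_\fp^{4}$ only on the outer eigenvalues, yielding $C_{d/(d,4)}$. Your pushforward of the uniform measure on $\mu_d$ under $\zeta\mapsto\zeta^4$ simply makes explicit the aggregation over Frobenius classes that the paper performs by conditioning on $\ord(\Frob_\fp)=f$.
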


\begin{proof} Let $\End_\Qbar(E_A,E_B)$ denote the ring of endomorphisms from $E_A$ to $E_B$ defined over $\Qbar$. Since $E_A$ and $E_B$ have complex multiplication by $K$ and are isogenous over $L$, the vector space $\End_\Qbar(E_A,E_B)\otimes \Q$ is endowed with the structure of a $K[\Gal(L/K)]$-module; let $\chi$ denote its character. Then for a prime~$\ell$, as in \cite[\S 3.3]{FS12}, we have the following isomorphism of $\overline\Q_\ell[G_K]$-modules 
\begin{equation}\label{equation: decomposition}
V_\ell(E_A)\otimes \overline \Q_\ell\simeq V_\sigma(E_B)\otimes\chi \oplus V_\sigma(E_B)\otimes\overline \chi\,.
\end{equation}
Here $V_\ell(E_A)$ denotes the $\ell$-adic Tate module of $E_A$, $\sigma$ and $\overline \sigma$ stand for the two embeddings of $M$ into $\overline \Q_\ell$, and
$$
V_\sigma(E_B):=V_\ell(E_B)\otimes_{M,\sigma} \overline \Q_\ell\,,\qquad V_{\overline\sigma}(E_B):=V_\ell(E_B)\otimes_{M,\overline\sigma} \overline \Q_\ell\,.
$$
Let $\fp$ be a prime of $K$ of good reduction for $E_A$ and $E_B$ such that $\Frob_\fp$ has order $f$ in $\Gal(L/K)$. Since $\chi$ is injective, it follows from (\ref{equation: decomposition}) that if $\{\alpha_\fp, \overline\alpha_\fp\}$ are the normalized eigenvalues of the action of $\Frob_\fp$ on $V_\ell(E_B)$, then $\{\zeta\alpha_\fp, \overline\zeta\overline\alpha_\fp\}$ are the normalized eigenvalues of the action of $\Frob_\fp$ on $V_\ell(E_A)$, where $\zeta$ is a primitive $f$th root of unity. Thus the normalized eigenvalues of the action of $\Frob_\fp$ relative to $M_1\otimes M_2$ are
\begin{equation}\label{equation: eigenvalues}
\{\zeta\alpha^3_\fp, \overline \zeta\overline\alpha^3_\fp,\overline\zeta\alpha_\fp, \zeta\overline\alpha_\fp\}\,.
\end{equation}
By \cite[Proposition 3.6]{FS12}, the sequence of $\alpha_\fp$'s with $\ord(\Frob_\fp)=f$ is equidistributed on $\Unitary(1)$ with respect to the Haar measure. By the translation invariance of the Haar measure, the sequence of $\beta_\fp$'s with $\ord(\Frob_\fp)=f$ is also equidistributed, where $\beta_\fp:=\overline\zeta\alpha_\fp$. Now \eqref{equation: eigenvalues} reads
$$
\{\zeta^4\beta^3_\fp, \overline \zeta^4\overline\beta^3_\fp,\beta_\fp, \overline\beta_\fp\}\,.
$$
Let $s=f/(f,4)$. We deduce that the normalized eigenvalues of the action of $\Frob_\fp$ relative to $M_1\otimes M_2$ with $\ord(\Frob_\fp)=f$ are equidistributed as the eigenvalues of a random matrix in the connected component of the matrix
$$
\begin{pmatrix} \Theta_s U & 0 \\ 0 & \overline\Theta_s\overline U 
\end{pmatrix}
$$
(in the notation of \S\ref{section: unitary}). The extension $L/K$ is cyclic of order dividing~$6$, which implies that the normalized Frobenius eigenvalues of $M_1\otimes M_2$ have the same distribution as the eigenvalues of a random matrix in the group $C_n$.
\end{proof}

\begin{remark}
The same proof works when $K=\Q(i)$, $L=K((A/B)^{1/4})$, $E_A\colon y^2=x^3+Ax$, and $E_B\colon y^2=x^3+Bx$. In this case, $n=d/(4,d)=1$, and the distribution of the normalized Frobenius eigenvalues of $M_1\otimes M_2$ is thus always governed by $C_1$.
\end{remark}

\subsection{Symmetric cubes of elliptic curves}
\label{subsection: Symmetric cubes}

We next consider motives of the form $M=\Sym^3 M_1$ over a field $K$ in which~$M_1$ is the 1-motive of an elliptic curve $E_1$.
The Sato-Tate group in this case should be $C_1, J(C_1),$ or~$D$, depending on whether $E$ has complex multiplication
defined over $K$, complex multiplication that is not defined over~$K$, or no complex multiplication at all.
This is effectively a special case of the product construction $M_1\otimes M_2$ with $E_1=E_2$, except that we do not necessarily require $E_1=E_2$ to have complex multiplication.
To compute the $L$-polynomial coefficients of $M$ we simply apply the equations in \eqref{eq:a1a2prod} with $t_1=t_2$. 

\begin{example}[$\boldsymbol{C_1,J(C_1),D}$]\label{example:eccube_C1}
See Table~\ref{table:eccube_C1} for moment statistics of the motive $M=\Sym^3 M_1$ in three cases: (1) $E_1$ is the elliptic curve $y^2=x^3+1$ over $\Q(\omega)$; (2) $E_1$ is the elliptic curve $y^2=x^3+1$ over $\Q$; and (3) $E_1$ is the elliptic curve $y^2=x^3+x+1$; along with the corresponding moments for $G=C_1,J(C_1),D$.
\end{example}

\subsection{Tensor product constructions using modular forms}\label{section: prod mf}

We now consider motives $M=M_1\otimes M_2$ that arise as the tensor product of the motive $M_1$
associated to a weight 2 newform $f_1$ (with Hodge numbers $h^{1,0}=h^{0,1}=1$) and the motive $M_2$ associated to a weight 3 newform $f_2$ (with Hodge numbers $h^{2,0}=h^{0,2}=1$).  We assume that both $f_1$ and $f_2$ have rational Fourier coefficients.

By Proposition \ref{proposition: nebentypus}, $f_1$ has trivial nebentypus and $f_2$ has CM by its (quadratic) nebentypus $\chi$. The motive $M$ is defined over $\Q$, and we consider its base change to a number field $K$. If the $q$-expansions of $f_1$ and $f_2$ are given by $f_1=\sum b_nq^n$ and $f_2=\sum d_nq^n$, then the coefficients of the $L$-polynomial $L_\fp(T)$ at a prime $\fp$ of $K$ of good reduction for $M$ are given by
\begin{equation}
c_1 = -b_pd_p\qquad\text{and}\qquad c_2=\chi(p)pb_p^2 + d_p^2-2\chi(p)p^2,
\end{equation}
where $p=N(\fp)$ and the integer coefficients $c_1$ and $c_2$ are as defined in \eqref{eq:Lpoly}.
The normalized  coefficients are then $a_1(\fp)=c_1/p^{3/2}$ and $a_2(\fp)=c_2/p^2$.

\begin{lemma} Let $M_1$ be the motive associated to a weight $2$ newform $f_1$ and let $M_2$ be the motive associated to a weight $3$ newform $f_2$. Assume that both $f_1$ and $f_2$ have rational Fourier coefficients. Then $M=M_1\otimes M_2$ is self-dual.
\end{lemma}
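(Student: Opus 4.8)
The plan is to prove the sharper statement that $M^{\vee}\cong M(3)$, equivalently that $M$ carries a non-degenerate pairing $M\otimes M\to\Q(-3)$. The strategy is to establish self-duality of the two tensor factors separately and then tensor the resulting isomorphisms. The two inputs are Proposition~\ref{proposition: nebentypus}, applied to each of $f_1$ and $f_2$, together with the fact (also from that proposition) that $f_2$ has complex multiplication by its nebentypus.

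First I would record the general fact that for the rank-$2$ motive $M_f$ attached to a newform $f$ of weight $k$ with rational coefficients and nebentypus $\varepsilon$, one has the canonical isomorphism $M_f^{\vee}\cong M_f\otimes(\wedge^2 M_f)^{-1}$ coming from the wedge pairing, together with $\wedge^2 M_f\cong\Q(-(k-1))\otimes\Q(\varepsilon)$, where $\Q(\varepsilon)$ denotes the Artin motive cut out by $\varepsilon$ (this is the determinant of the Galois realization, $\varepsilon\chi_{\ell}^{k-1}$, rewritten motivically). Since $\varepsilon$ takes values in $\{\pm1\}$ we have $\Q(\varepsilon)^{-1}=\Q(\varepsilon)$, so
\[
M_f^{\vee}\;\cong\;M_f(k-1)\otimes\Q(\varepsilon).
\]
Applying this to $f_1$: by part~(i) of Proposition~\ref{proposition: nebentypus} the nebentypus of the weight-$2$ form $f_1$ is trivial, so $M_1^{\vee}\cong M_1(1)$. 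Applying it to $f_2$: by part~(ii) the nebentypus $\chi$ of the weight-$3$ form $f_2$ is quadratic and $f_2$ has CM by $\chi$; the latter means $f_2\otimes\chi=f_2$, hence $M_2\otimes\Q(\chi)\cong M_2$, and therefore
\[
M_2^{\vee}\;\cong\;M_2(2)\otimes\Q(\chi)\;\cong\;M_2(2).
\]

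Finally I would tensor the two duality isomorphisms:
\[
M^{\vee}=M_1^{\vee}\otimes M_2^{\vee}\;\cong\;M_1(1)\otimes M_2(2)\;\cong\;(M_1\otimes M_2)(3)=M(3),
\]
and, since $M$ has motivic weight $1+2=3$, this is exactly the assertion that $M$ is self-dual (the induced polarization is alternating, since that of $M_1$ is and that of $M_2$ is symmetric, consistent with placing the Sato-Tate group of $M$ in $\USp(4)$ as in (ST1)). As a sanity check one can rederive this directly on Euler factors: the formulas $c_1=-b_pd_p$ and $c_2=\chi(p)pb_p^2+d_p^2-2\chi(p)p^2$ show that $L_{\fp}(T)$ has the palindromic shape of \eqref{eq:Lpoly}, equivalently that the Frobenius eigenvalue multiset $\{\alpha_i\beta_j\}_{i,j}$ (where $\alpha_1\alpha_2=p$ and $\beta_1\beta_2=\chi(p)p^2$) is stable under $\gamma\mapsto p^3/\gamma$; for primes inert in the CM field of $f_2$ this is where one uses that $d_p=0$.

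I expect the only real content to lie in the $M_2$ step: without the CM hypothesis one would only get that $M_2$, hence $M$, is self-dual up to a quadratic twist, so the point is precisely to invoke Proposition~\ref{proposition: nebentypus}(ii) to see that $f_2$ has CM by its own nebentypus and that this is exactly the character needed to absorb the twist $\Q(\chi)$ coming from $\wedge^2 M_2$. The remaining manipulations are formal properties of Tate twists, duals, and tensor products.
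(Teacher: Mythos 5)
Your argument is correct, and it reaches the conclusion by a genuinely different route than the paper. The paper's proof works prime by prime: it writes down the normalized Frobenius eigenvalues of $M_1$ and $M_2$ at each good prime, splits into the two cases $\chi(p)=\pm 1$ (using that $d_p=0$ when $\chi(p)=-1$, which is where CM enters), and checks directly that the resulting four eigenvalues of $M$ come in conjugate pairs. You instead argue structurally: the rank-$2$ duality $M_f^{\vee}\cong M_f(k-1)\otimes\Q(\varepsilon)$ reduces everything to the determinant, Proposition~\ref{proposition: nebentypus} kills the twist for $f_1$ outright, and for $f_2$ the CM relation $M_2\otimes\Q(\chi)\cong M_2$ absorbs the quadratic twist globally rather than prime by prime; tensoring then gives the sharper statement $M^{\vee}\cong M(3)$. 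Your version buys conceptual clarity --- it isolates the nebentypus twist as the sole obstruction to self-duality and explains exactly why the CM hypothesis removes it, and it also identifies the polarization type (alternating on $M_1$, symmetric on $M_2$ after untwisting, hence alternating on $M$, consistent with (ST1)). The paper's version buys elementarity: it never has to make precise the motivic formalism of duals, determinants, and Artin motives, and it operates entirely at the level of the $L$-polynomials \eqref{eq:Lpoly}, which is all that is used downstream; your ``sanity check'' on Euler factors is essentially the paper's entire proof. One small caveat: your main argument presupposes the determinant computation $\wedge^2 M_f\cong\Q(-(k-1))\otimes\Q(\varepsilon)$ and the identification of the motive of $f\otimes\chi$ with $M_f\otimes\Q(\chi)$ as known facts about modular motives; these are standard (Deligne, Scholl) but are a heavier input than the paper needs, since for the self-duality of the $L$-polynomials the eigenvalue bookkeeping suffices.
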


\begin{proof}
It is enough to show that the (normalized) Frobenius eigenvalues of~$M$ at prime of good reduction~$p$ come in conjugate pairs. Let $\alpha_p$ and $\overline\alpha_p$ denote the normalized Frobenius eigenvalues of~$M_1$. We have $\alpha_p\overline\alpha_p=1$, since the nebentypus of $f_1$ is trivial. For the normalized Frobenius eigenvalues of $M_2$ we have two cases according to the value of the nebentypus~$\chi$ of~$f_2$ at~$p$: $(1)$~if $\chi(p)=-1$, then they are~$1$ and~$-1$, since $f_2$ has CM by $\chi$, and $(2)$ if $\chi(p)=1$, then they are $\beta_p$ and $\overline \beta_p$ with $\beta_p\overline \beta_p=1$. In any of the two cases, we readily check that the normalized Frobenius eigenvalues of~$M$ come in conjugate pairs
\begin{align*}
(1): & \quad\{\alpha_p\beta_p,\,\alpha_p\overline\beta_p,\,\overline\alpha_p\beta_p,\,\overline\alpha_p\overline\beta_p\}\,,\\
(2): &  \quad\{\alpha_p,\,-\alpha_p,\,\overline\alpha_p,\,-\overline\alpha_p\}\,.
\end{align*}
Consequently, $M$ is self-dual.
\end{proof}

\begin{remark} With the hypothesis of the lemma, $M_2$ is not self-dual, since the nebentypus of $f_2$ is not trivial (and note therefore that this is not an obstruction for $M_1\otimes M_2$ being self-dual). In particular, seen as a motive over $\Q$, the Sato-Tate group
$$
\left\langle
\begin{pmatrix} 
u & 0 \\ 
0 & \overline u
\end{pmatrix},\,
\begin{pmatrix} 
1 & 0 \\ 
0 & -1
\end{pmatrix},\,
\begin{pmatrix} 
0 & 1 \\ 
-1 & 0
\end{pmatrix}
: u\in\C,\,u\cdot \overline u=1 
\right\rangle
$$
of $M_2$ is a subgroup of $\Unitary(2)$ not contained in $\SU(2)$.
\end{remark}

\begin{example}[$\boldsymbol{C_1,J(C_1)}$]\label{example:mfprod_C1}
Let $f_1=f_\psi$ be the weight 2 newform \href{http://www.lmfdb.org/ModularForm/GL2/Q/holomorphic/27/2/1/a/}{27.2.1a}, corresponding to the elliptic curve $y^2+y=x^3$, and let $f_2=f_{\psi^2}$, which is a weight 3 newform of level 27; both $f_1$ and $f_2$ have CM by $\Q(\omega)$.
Moment statistics for the motive $M = M_1\otimes M_2$ over $K=\Q(\omega),\Q$  are listed in Table~\ref{table:mfprod_C1}, along with the corresponding moments for $G=C_1,J(C_1)$.
\end{example}

\begin{remark}
The sequence of $L$-polynomials of the motive constructed as a tensor product $M_1\otimes M_2$ in Example \ref{example:mfprod_C1}, using $f_1=$ \href{http://www.lmfdb.org/ModularForm/GL2/Q/holomorphic/27/2/1/a/}{27.2.1a} and $f_2=f_{\psi^2}$ is identical to the sequence of $L$-polynomials of the motive constructed as a direct sum $M_1\oplus M_2$ in Example \ref{example:mfsum_C1}, using $f_1=$ \href{http://www.lmfdb.org/ModularForm/GL2/Q/holomorphic/27/2/1/a/}{27.2.1a} and $f_2 =$ \href{http://www.lmfdb.org/ModularForm/GL2/Q/holomorphic/9/4/1/a/}{9.4.1a}.
\end{remark}

\begin{example}[$\boldsymbol{C_2,J(C_2)}$]\label{example:mfprod_C2}
Let $f_1=f_\psi$ be the weight 2 newform \href{http://www.lmfdb.org/ModularForm/GL2/Q/holomorphic/32/2/1/b/}{32.2.1b}, corresponding to the elliptic curve $y^2=x^3-x$, and let $f_2=f_{\psi^2\otimes\phi}$ be the weight~3 newform of level 576 constructed in \S\ref{subsection: psi-4}, which is a quartic twist of $f_{\psi^2}$; both $f_1$ and $f_2$ have CM by $\Q(i)$.
Moment statistics for the motive $M = M_1\otimes M_2$ over $K=\Q(\omega),\Q$  are listed in Table~\ref{table:mfprod_C2}, along with the corresponding moments for $G=C_2,J(C_2)$.
\end{example}

\begin{example}[$\boldsymbol{C_3,J(C_3)}$]\label{example:mfprod_C3}
Let $f_1$ be the weight 2 newform \href{http://www.lmfdb.org/ModularForm/GL2/Q/holomorphic/36/2/1/a/}{36.2.1a}, which is the cubic twist of $f_\psi=$\href{http://www.lmfdb.org/ModularForm/GL2/Q/holomorphic/27/2/1/a/}{27.2.1a} corresponding to the elliptic curve $y^2=x^3+1$, and let $f_2=f_{\psi^2}$, a weight~3 newform of level 27; both $f_1$ and $f_2$ CM by $\Q(\omega)$.
Moment statistics for the motive $M = M_1\otimes M_2$ over $K=\Q(\omega),\Q$  are listed in Table~\ref{table:mfprod_C3}, along with the corresponding moments for $G=C_3,J(C_3)$.
\end{example}

\begin{remark} The behavior observed in the above examples can be explained by means of arguments completely analogous to those of Lemma \ref{lemma:corrmf}.
Let $\psi$ be a Hecke character of a quadratic imaginary field $K$ of infinite type $1$ such that $f_\psi$ has rational coefficients. Let $\phi_1$ (resp.~$\phi_2$) be a Hecke character of finite order~$n$ such that $f_{\psi^2\otimes\phi_1}$ (resp. $f_{\psi\otimes\phi_2}$) has rational coefficients.
We then have:
\begin{enumerate}[(i)] 
\item If $f_1:=f_{\psi^2}$ and $f_2:=f_{\psi\otimes\phi_2}$, then the distribution of the normalized Frobenius eigenvalues of $M_1\otimes M_2$ (resp. of the base change $(M_1\otimes M_2)_K$) coincides with the distribution of the eigenvalues of a random element in $J(C_t)$ (resp. $C_t$), where $t=n/(n,2)$.
\item If $f_1:=f_{\psi^2\otimes\phi_1}$ and $f_2:=f_{\psi}$, then the distribution of the normalized Frobenius eigenvalues of $M_1\otimes M_2$ (resp. of the base change $(M_1\otimes M_2)_K$) coincides with the distribution of the eigenvalues of a random element in $J(C_t)$ (resp. $C_t$), where $t=n/(n,4)$.
\end{enumerate}
\end{remark}

\begin{example}[$\boldsymbol{F,F_{ab},F_c,F_{ab,c}}$]\label{example:mfprod_F}
Let $f_1$ be the weight 2 newform \href{http://www.lmfdb.org/ModularForm/GL2/Q/holomorphic/32/2/1/a/}{32.2.1a}, which has CM by $\Q(i)$, and let $f_2:=f_{\psi^2}$, a weight~3 newform of level 27, which has CM by $\Q(\omega)$.
Moment statistics for the motive $M = M_1\otimes M_2$ over the fields $K=\Q(i,\omega)$, $\Q(\sqrt{3})$, $\Q(i)$, $\Q$  are listed in Table~\ref{table:mfprod_F}, along with the corresponding moments for $G=F,F_{ab},F_c,F_{ab,c}$.
With $K=\Q(\omega)$ one obtains essentially the same moment statistics as with $K=\Q(i)$; this is as expected, since the groups $F_{abc}$ and $F_c$ are conjugate.
\end{example}

\begin{example}[$\boldsymbol{\Unitary(2),N(\Unitary(2))}$]\label{example:mfprod_U2}
Let $f_1$ be the weight 2 newform \href{http://www.lmfdb.org/ModularForm/GL2/Q/holomorphic/11/2/1/a/}{11.2.1a}, corresponding to the elliptic curve $y^2+y=x^3-x^2-10x-20$, which does not have CM, and let $f_2:=f_{\psi^2}$, a weight~3 newform of level 27, which has CM by $\Q(\omega)$.
Moment statistics for the motive $M = M_1\otimes M_2$ over $K=\Q(\omega),\Q$  are listed in Table~\ref{table:mfprod_U2}, along with the corresponding moments for $G=\Unitary(2),N(\Unitary(2))$.
\end{example}

\section{The Dwork pencil}\label{section: Dwork}
We next describe a construction that gives rise to motives whose $L$-polynomial distributions match the group $\USp(4)$, something that cannot be achieved using any of the preceding methods.
To facilitate explicit computations with the Dwork pencil of threefolds,
we work with a family of hypergeometric motives defined by fixed parameters $\alpha=(1/5,2/5,3/5,4/5)$ and $\beta=(0,0,0,0)$, and a varying parameter $z=(5/t)^5$, where $t$ is the Dwork pencil parameter, as described in \cite{CR12}. We first summarize the general setup in \cite{CR12} and then specialize to the case of interest.

\subsection{Trace formulas and algorithms}\label{subsection:trace_formula}

For a prime $p$, let $\Q_{(p)}$ denote the ring of rational numbers with denominators prime to $p$.
For $z \in \Q_{(p)}$, we write $\Teich(z)$ to denote the Teichm\"uller lift of the reduction of $z$ modulo $p$.
Letting $\Gamma_p(x)$ denote the $p$-adic gamma function, for each prime power $q=p^f$ we define $\Gamma_q^*(x):=\prod_{v=0}^{f-1}\Gamma_p(\{p^vx\})$, 
and then define a $p$-adic analogue of the Pochhammer symbol by setting
\[
(x)_{m}^* := \frac{\Gamma_q^*\left(x+\frac{m}{1-q}\right)}{\Gamma_q^*(x)}.
\]
Given vectors $\alpha=(\alpha_1,\ldots,\alpha_r)$ and $\beta=(\beta_1,\ldots,\beta_r)$ in $\Q_{(p)}^r$ and $z\in\Q_{(p)}$, for each prime power $q=p^f$ we define
\begin{equation}\label{eq:Hq}
H_q\left(\begin{matrix}\alpha\\\beta\end{matrix}\Big\vert z\right) := \frac{1}{1-q}\sum_{m=0}^{q-2}(-p)^{\eta_m(\alpha)-\eta_m(\beta)}q^{\xi_m(\beta)}\left(\prod_{j=1}^r \frac{(\alpha_j)_m^*}{(\beta_j)_m^*}\right)\Teich(z)^m,
\end{equation}
using the notations
\[
\eta_m(x_1,\ldots,x_r):=\sum_{j=1}^r\sum_{v=0}^{f-1}\left\{p^v\left(x_j+\frac{m}{1-q}\right)\right\}-\left\{p^vx_j\right\},
\]
and
\[
\xi_m(\beta):=\#\{j:\beta_j=0\}-\#\left\{j:\beta_j+\frac{m}{1-q}=0\right\}.
\]
(with $\beta=(0,0,0,0)$ we have $\xi_m(\beta)=4$ for all nonzero $m$ and $\xi_0(\beta)=0$).

Now let $X_\psi$ be the quintic threefold given in \eqref{eq:quintic},
\[ 
x_0^5 + x_1^5 + x_2^5 + x_3^5 + x_4^5 = t x_0 x_1 x_2 x_3 x_4,
\]
with the parameter $t=5\psi$.
Let $V_\psi$ be the subspace of $H^3(X_\psi,\C)$ fixed by the automorphism group
\[
\{(\zeta_1,\ldots,\zeta_5)|\zeta_i^5=1,\zeta_1\cdots\zeta_5=1\},
\]
acting by $x_i\mapsto\zeta_ix_i$.  For primes $p\ne 5$ for which we have $\psi^5\not\equiv 1\bmod p$ and $\psi\ne\infty\bmod p$, the Euler factor of the $L$-function of $V_\psi$ at $p$ has the form  \eqref{eq:Lpoly},
\[
L_p(T) = p^6T^4+c_1p^3T^3 + c_2pT^2+c_1T+1,
\]
where $c_1$ and $c_2$ are integers.
For $\psi\not\equiv 0\bmod p$, the trace of the geometric Frobenius on $V_\psi$ is given by
\[
\Trace\left(\Frob_q\big\vert_{V_\psi}\right) = H_q\left(\begin{matrix}\frac{1}{5}&\frac{2}{5}&\frac{3}{5}&\frac{4}{5}\\0&0&0&0\end{matrix}\thinspace\Big| \psi^{-5}\right).
\]
Abbreviating the right-hand side as $H_q$, we have
\[
c_1 = -H_p,\qquad\text{and}\qquad c_2=\frac{1}{2p}\left(H_p^2-H_{p^2}\right).
\]

The Weil bounds imply that $|c_1|\le 4p^{3/2}$, so for $p > 64$ it suffices to compute $H_p\bmod p^2$.
Computing $c_2$ requires more precision: we have $-4p^3 < 2pc_2 \le 12p^3$, so for $p>16$ it is enough to compute $H_p$ and $H_{p^2}$ modulo $p^4$.

Specializing $\alpha=(1/5,2/5,3/5,4/5)$ and $\beta=(0,0,0,0)$ in \eqref{eq:Hq} allows us to simplify the formulas.
For the sake of brevity (and ease of computation), we focus on the problem of computing $H_p\bmod p^2$, so $q=p$ and $f=1$.
We have $\eta_0(x)=\xi_0(x)=0$ and $(\alpha_j)_0^*=(\beta_j)_0^*=1$, thus the $m=0$ term in \eqref{eq:Hq} is equal to 1.
For $m>0$ we have $\xi_m(\beta)=4$, and one finds that
\[
\eta_m(\alpha)-\eta_m(\beta)=\begin{cases}
-4\qquad & \text{if } 0 < m < \lfloor\frac{p+4}{5}\rfloor,\\
-3 &\text{if } \lfloor\frac{p+4}{5}\rfloor \le m < \lfloor\frac{2p+3}{5}\rfloor,\\
-2 &\text{if } \lfloor\frac{2p+3}{5}\rfloor \le m < \lfloor\frac{3p+2}{5}\rfloor,\\
-1 &\text{if } \lfloor\frac{3p+2}{5}\rfloor \le m < \lfloor\frac{4p+1}{5}\rfloor,\\
0&\text{if } m \ge \lfloor\frac{4p+1}{5}\rfloor.\\
\end{cases}
\]
When working modulo $p^2$, only the first two ranges of $m$ are relevant (the other terms in the sum are all divisible by $p^2$), and we may write
\[
H_p\left(\begin{smallmatrix}\frac{1}{5}&\frac{2}{4}&\frac{3}{5}&\frac{4}{5}\\0&0&0&0\end{smallmatrix}\Big\vert z\right)\equiv\frac{1+S_1-pS_2}{1-p}\bmod p^2,
\]
where
\[
S_1=\sum_{m=1}^{m_1-1}\left(\prod_{j=1}^4\frac{(j/5)_m^*}{(0)^*_m}\right)\Teich(z)^m,\quad
S_2=\sum_{m=m_1}^{m_2-1}\left(\prod_{j=1}^4 \frac{(j/5)_m^*}{(0)^*_m}\right)\Teich(z)^m,
\]
with $m_1= \lfloor\frac{p+4}{5}\rfloor$ and $m_2=\lfloor\frac{2p+3}{5}\rfloor$.

To compute $H_p\bmod p^2$, it suffices to compute $S_1\bmod p^2$ and $S_2\bmod p$.
Evaluating the Pochhammer symbols $(\cdot)^*_m$ that appear in the formulas for $S_1$ and $S_2$ thus reduces to computing $\Gamma_p(x)$ modulo $p^2$, or modulo $p$.
To compute $\Gamma_p(x)\bmod p^2$ for $x\in \Q_{(p)}$, we first reduce $x$ modulo $p^2$ and use
\begin{equation}\label{eq:gamma_id}
\Gamma_p(x+1) = \begin{cases}
-x\Gamma_p(x)\qquad&\text{for $x\in\Z_p^*$},\\
-\Gamma_p(x)&\text{for $x\in p\Z_p$},
\end{cases}
\end{equation}
to shift the argument down so that it is divisible by~$p$.  We then apply
\[
\Gamma_p(py)\equiv 1+\left(1+\frac{1}{(p-1)!}\right)y\bmod p^2.
\]
For $x=x_0+px_1$ with $0< x_0<p$, we have
\begin{align*}
\Gamma_p(x) &\equiv (-1)^{x_0}(px_1+1)\cdots(px_1+x_0-1)\left(1+\left(1+\frac{1}{(p-1)!}\right)x_1\right)\bmod p^2\\
&\equiv (-1)^{x_0}\left(px_1\sum_{k=1}^{x_0-1}\frac{(x_0-1)!}{k}+(x_0-1)!\right)\left(1+\left(1+\frac{1}{(p-1)!}\right)x_1\right)\bmod p^2.
\end{align*}
To compute $\Gamma_p(x)\bmod p$, simply apply the above formula with $x_1=0$.

Now let $F_n:=n!$ and $T_n:=\sum_{k=1}^n\frac{n!}{k}$.
We may compute $F_n$ and $T_n$ modulo~$p^2$ for $0\le n < p$ via the recurrences $F_{n+1}=(n+1)F_n$ and
$T_{n+1}=(n+1)T_n+F_n$, with $F_0=1$ and $T_0=0$.
Having computed the $F_n$ and $T_n$ using $O(p)$ operations in $\Z/p^2\Z$, we can use the above formulas to compute $\Gamma_p(x)$ for any $x\in\Z/p^2\Z$ using $O(1)$ operations in $\Z/p^2\Z$.
Noting that $\Teich(z)\equiv z^p\bmod p^2$, we can compute $H_p\bmod p^2$ using a total of $O(p)$ operations in $\Z/p^2\Z$.

To efficiently compute the moment statistics of $a_1$ for a large set $S$ of parameter values $z$ in parallel, for each $p$ up to a given bound~$N$ we compute $H_p(z)$ as a polynomial in $\Teich(z)$ with coefficients in $\Z/p^2\Z$.
For $p < \min(\#S,N)$, we then compute $H_p(z^p)\bmod p^2$ for every nonzero $z\in\Z/p\Z$ using fast algorithms for multi-point polynomial evaluation \cite[Alg.~10.8]{GG03}, and construct a lookup table that maps values of $z$ in $\Z/p\Z$ to values of $a$.
If $M = \#S$, then we can compute $H_p(z) \bmod p^2$ for all primes $p\le N$ and all $z\in S$ in time
\[
O\bigl(\pi(N)\Mtime(N)\Mtime(\log N)\log N + M\pi(N)\log N\bigr),
\]
where $\Mtime(n)$ denotes the cost of multiplication.
For $M\ge N$, this corresponds to an average cost of $O((\log N)^{3+o(1)})$ per $H_p(z)$ computation.

Computing the moment statistics of $a_2$ is substantially more work, since we then also need to compute $H_{p^2}(z)$ (modulo $p^4$), which involves $O(p^2)$ arithmetic operations, compared to the $O(p)$ operations needed to compute $H_p(z)$.
To compute $\Gamma_p(x)\bmod p^4$ for $x\in\Q_{(p)}$, we first reduce $x$ modulo $p^4$ and use \eqref{eq:gamma_id} to shift the argument down so that it is divisible by~$p$.  We then apply the formula
\[
\Gamma_p(py) \equiv 1 + a_1y + a_2y^2 + a_3y^3 \bmod p^4,
\]
with
\begin{align*}
a_2 &\equiv -\bigl((p-1)! + 1/(p-1)! + 2\bigr)/2\bmod p^4,\\
a_1 &\equiv -\bigl(8(p-1)! +(2p)!/(2p^2) + 4a_2 + 7\big)/6\bmod p^4,\\
a_3 &\equiv -\bigl((p-1)! + 1 + a_1 + a_2\bigr)\bmod p^4.
\end{align*}
After computing $H_p(z)$ and $H_{p^2}(z)$, one then computes $H_p(z)^2-H_{p^2}\bmod p^4$, lifts this value to an integer that is known to lie in the interval $[-4p^3,12p^3]$, and then divides by $2p$ to obtain the $L$-polynomial coefficient $c_2$, and $a_2=c_2/p^2$.

\begin{remark}
Given the higher cost of computing moment statistics for $a_2$, for the purposes of comparison with $\USp(4)$, we choose to mainly focus on $a_1$. This is reasonable because the $a_1$ moments of $\USp(4)$ easily distinguish it from any of the other candidate Sato-Tate groups, as can be seen in Table~\ref{table:a1moments}.

On the other hand, an ongoing project of the second author with Edgar Costa and David Harvey is expected to yield a computation of $a_2$ using only $O(p)$ arithmetic operations. The strategy is to view the members of the Dwork pencil as nondegenerate toric hypersurfaces, then make a careful computation in $p$-adic cohomology in the style of the work of the second author \cite{Ked01} on hyperelliptic curves.
\end{remark}

Note that the algorithms described above cannot be used when $t=0$,
because then the condition $\psi \not\equiv 0 \pmod{p}$ is never satisfied. For completeness, we describe this case separately.
\begin{example}[$\boldsymbol{F_{ac}}$] \label{example:Fac}
Let $M$ be the motive arising from the quintic threefold~\eqref{eq:quintic}
with parameter $t = 0$. The $L$-polynomials in this case where computed by Weil in terms of Jacobi sums; they coincide with the $L$-polynomials of the unique algebraic Hecke character over $\Q(\zeta_5)$ of conductor
$(1-\zeta_5)^2$ and infinite type $(3,0), (2,1)$.
The latter can be computed efficiently using \cite{Magma}, as demonstrated to us by Mark Watkins.
Moment statistics for the motive $M$ over $K=\Q$ 
 are listed in Table~\ref{table:Fac}, along with the corresponding moments for $G=F_{ac}$.
\end{example}

\subsection{Experimental results}

Using the algorithms described in the previous section, we computed $a_1$ moment statistics for the family of hypergeometric motives with rational parameter~$z$ of height at most $10^3$; the set $S$ of such $z$ has cardinality greater than $10^6$.
We computed $c_1$~values for all $z\in S$ and all $p\le 2^{14}$, and for a subset of the $z\in S$ we continued the computation over $p\le 2^{20}$.
For each value of $z$ we computed the moment statistic $M_n[a_1]$ for $1\le n\le 12$.  In every case the moment statistics appeared to match the $a_1$ moment sequence of $\USp(4)$ listed in Table~\ref{table:a1moments}.
We note that $\USp(4)$ is the only group with $M_4[a_1]=3$, and its sixth moment $M_6[a_1]=14$ is less than half any of the other values for $M_6[a_1]$ listed in Table~\ref{table:a1moments}; these differences are clearly evident in the moment statistics, even when using a norm bound as small as~$B=2^{14}$.

We then conducted similar experiments for each of  the following families:
\begin{itemize}
\item $z=(5/t)^5$ for rational $t$ of height at most 1000;
\item $z=1+1/n$ for integers $n$ of absolute value at most $10^5$.
\item $z = (z_3\zeta^3+z_2\zeta^2+z_1\zeta+z_0)^{-1}$ for a primitive fifth root of unity $\zeta$ and integers $z_0,z_1,z_2$, and $z_3$ of absolute value at most 10.
\end{itemize}
In every case the moment statistics again appeared to match the $\USp(4)$ moment sequence; we found no exceptional cases
aside from the excluded case $t=0$ (see Example~\ref{example:Fac}).

\begin{example}[$\boldsymbol{\USp(4)}$]\label{example:USp4}
Let $M$ be the motive arising from the quintic threefold~\eqref{eq:quintic}
with parameter $t = -5$ (that is, $z=-1$), as described in \S\ref{subsection:trace_formula}, over the field $K=\Q$.
Table~\ref{table:USp4} lists moment statistics of $a_1$ as the norm bound $B=2^n$ varies from $2^{10}$ to $2^{24}$,
and moment statistics of $a_2$ with $B=2^n$ varying from $2^{10}$ to $2^{13}$.
The corresponding moments for the group $G=\USp(4)$ are shown in the last line for comparison.
\end{example}

\begin{remark} \label{R:de jong}
It is worth contrasting the behavior of the Dwork pencil of threefolds with the behavior of a universal family
of elliptic curves, in which one always sees infinitely many curves with complex multiplication.  It has been suggested by de Jong that the scarcity of special members of the Dwork family may be explained by Hodge-theoretic
considerations (unpublished, but see \cite{dJ02}). However, such considerations do not give any indication about the \emph{number} of exceptional cases. It is entirely possible that there are some unobserved
exceptional cases arising at large height and/or over a number field other than~$\Q$.
\end{remark}

\begin{remark}
The Dwork pencil is a family of \emph{hypergeometric motives}, i.e., a family whose Picard-Fuchs equation is a hypergeometric differential equation. One can classify such families for fixed weight and Hodge numbers; for the values we are considering, there are 47 such families
(as verified by the \cite{Magma} command
\texttt{PossibleHypergeometricData}).
The computation of $L$-polynomials in these families has recently been
implemented by Mark Watkins in \cite{Magma}, and leads to some other exceptional cases (e.g., example H126E5 in the \textit{Magma Handbook}).
\end{remark}

\section{More modular constructions}

At this point, all of the groups listed in Table~\ref{table:STgroups}
are accounted for except for $F_{a,b,c}$ and $N(G_{3,3})$. 
We conclude with some more exotic uses of modular forms, leading to a realization of $N(G_{3,3})$ and a tantalizing near-miss for $F_{a,b,c}$. Thanks to Mark Watkins for suggesting these examples
and providing assistance with computations in \cite{Magma}.

\subsection{Hilbert modular forms}

\begin{example}[$\boldsymbol{N(G_{3,3})}$]  \label{example:hmf}
There is a unique normalized Hilbert modular eigenform 
over $K = \Q(\sqrt{5})$ of level $\Gamma_0(2\sqrt{5})$ and weight $(2,4)$. This gives rise to a motive $M$ of the desired form
by a procedure described in \cite{BR93} (which gives a motive over $K$) followed by a base change from $K$ to $\Q$.
Moment statistics for the motive $M$ over $\Q$ are listed in Table~\ref{table:hmf}, along with the corresponding moments for $G=N(G_{3,3})$. Due to computational limitations of \cite{Magma}, we were only able to compute $a_1$, and we were forced to limit the prime bound to $2^{14}$, limiting the quality of the numerical evidence. However, note that $M_4[a_1]$ appears to be converging quite rapidly to 5, and that this value occurs for no groups in Table~\ref{table:STgroups} other than $N(G_{3,3})$.
\end{example}

The motive in Example~\ref{example:hmf} is somewhat hard to write down explicitly. However, one expects that a generic example of this form should give the same Sato-Tate group, and there exist other examples where the motive appears much more explicitly.
\begin{example}
Define the two-variable Chebyshev polynomial
\[
P(x,y) = x^5 + y^5 -5xy(x^2+y^2) +5xy(x+y) +5(x^2+y^2) -5(x+y).
\]
Form the affine threefold
\[
\operatorname{Spec} \Q[x_1,x_2,x_3,x_4]/(P(x_1,x_2) - P(x_3,x_4)),
\]
then take the Zariski closure in $\mathbb{P}^4_\Q$. It was observed by
Consani-Scholten \cite{CS01} that the resulting threefold has 120 ordinary double points and no other singularities.
Blow up these double points to obtain a smooth threefold, then take middle cohomology to obtain a motive $M$.

It was conjectured by Consani-Scholten 
and proved by Dieulefait-Pacetti-Sch\"utt \cite{DPS12} that this is an example of a nonrigid
modular Calabi-Yau threefold. More precisely, the $L$-function of $M$ coincides with that of a certain Hilbert newform over $K = \Q(\sqrt{5})$ of level $\Gamma_0(30)$ (or rather its base change from $K$ to $\Q$).
\end{example}

\subsection{Other Hecke characters}

So far we have only considered Hecke characters over quadratic fields. However, algebraic Hecke characters over larger fields also correspond to motives, as described in \cite{Sc88}. We 
have seen one instance of this in another guise in
Example~\ref{example:Fac}. It is tempting to try to realize $F_{a,b,c}$ using a variant of that example; this turns out to be possible for motives with coefficients in a real quadratic field, but it remains unclear whether rational coefficients can be achieved.

\begin{example} \label{example:hecke_char}
Consider the number field $K = \Q[\alpha]/(\alpha^4-2\alpha^3+5\alpha^2-4\alpha+2)$,
labeled
\href{http://www.lmfdb.org/NumberField/4.0.1088.2}{4.0.1088.2}
in \cite{LMFDB};
this is a CM field of class number 1 whose
Galois group is the dihedral group of order 8.
Let $\fp$ be the unique (ramified) prime of norm 17.
There is then a unique algebraic Hecke character $\psi$ of conductor $\fp$ 
and infinite type $(3,0), (1,2)$.
The resulting motive $M$ is defined over $\Q$ but has coefficients
in $\Q(\sqrt{17})$; it is thus not covered by our classification.
Nonetheless, one can compute $L$-polynomial coefficients in \cite{Magma} and observe good agreement with moment statistics for the group $F_{a,b,c}$.
\end{example}

\begin{remark} \label{remark:abelian surface groups}
One can construct similar examples of infinite type $(1,0), (1,0)$.
One thus obtains motives with the Hodge numbers of an abelian surface,
but having Sato-Tate group $F_{a,b,c}$ which is shown not to occur for abelian surfaces in \cite{FKRS12}. 
In particular, the three groups appearing in the group-theoretic classification of \cite{FKRS12} which are not realized by abelian surfaces appear to be realized by motives with nonrational coefficients.
\end{remark}

\begin{remark}
For any example constructed from Hecke characters as above, the connected part of the Sato-Tate group should be a torus. If so, one can prove equidistribution using the work of Johansson \cite{Joh14}.
\end{remark}

\section{Moment statistics}\label{section: tables}
This section lists moment statistics for the various motives constructed in the previous three sections.
In each of the tables that follow, the column $n$ indicates the norm bound $B=2^n$ on the degree 1 primes $\fp$ of $K$ for which $L$-polynomials~$L_\fp(T)$ were computed.  The remaining columns list various moment statistics~$M_n[a_i]$ of the normalized $L$-polynomial coefficients $a_1$ and $a_2$.  Following each example, the corresponding moments of the candidate Sato-Tate group $G$ are listed for comparison.

\begin{landscape}
\setlength{\extrarowheight}{-0.48mm}
\setlength{\tabcolsep}{1.5mm}
\begin{table}
\begin{center}

\end{center}
\end{table}

\end{landscape}

\section*{Acknowledgments}
Thanks to Josep Gonz\'alez, Joan-C. Lario, Fernando Rodriguez Villegas,
and Mark Watkins for helpful discussions. Thanks to Jean-Pierre Serre for suggesting the construction of \S\ref{section: sum}. 

\end{document}